\documentclass[a4paper,12pt]{article}
\usepackage{a4}
\usepackage{amsmath}
\usepackage{amsfonts}
\usepackage{amsthm}
\usepackage{amssymb}
\usepackage{graphicx}
\usepackage[all,cmtip]{xy}
\makeatletter

\newtheorem{thm}{Theorem}

\newtheorem{prop}[thm]{Proposition}

\newtheorem{lemma}[thm]{Lemma}

\newtheorem{theorem}[thm]{Theorem}

\begin{document}
\begin{center}
\large \bf {Coloring of a Digraph}
\end{center}
$$
\textbf{E. Sampathkumar}
$$
\begin{center}
\small Department of Studies in Mathematics\\
\small University of Mysore\\
\small Manasagangothri\\
\small MYSORE - 570006, INDIA.
\end{center}
\begin{center}
\small{e-mail: esampathkumar@gmail.com}\\
\end{center}

\begin{abstract}

\qquad A \emph{coloring} of a digraph $D=(V,E)$ is a coloring of its vertices following the rule:
Let $uv$ be an arc in $D$. If the tail $u$ is colored first, then the head $v$ should receive a color different from that of $u$. The \emph{dichromatic number} $\chi_d(D)$ of $D$ is the minimum number of colors needed in a coloring of $D$. Besides obtaining many results and bounds for $\chi_d(D)$ analogous to that of chromatic number of a graph, we prove $\chi_d(D)=1$ if $D$ is acyclic. New notions of sequential colorings of graphs/digraphs are introduced. A characterization of acyclic digraph is obtained interms of $L$-matrix of a vertex labeled digraph.
\end{abstract}
\noindent \textit{MATHEMATICS} Subject Classification: 05C15, 05B20.

\noindent \textbf{Keywords:} Coloring of a Digraph, dichromatic number, sequential coloring of graphs/digraphs, $L$-matrix of colored digraphs.

\section{Introduction}

\qquad A \emph{coloring} of a digraph $D=(V,E)$ is a coloring of its vertices by the following rule:\\
Let $uv$ be an arc in $D$. If the tail $u$ is colored first, then the head $v$ should receive a color different from that of $u$. But, if $v$ is colored first, then $u$ may or may not receive the color of $v$.\\
The \emph{dichromatic number} $\chi_d(D)$ of $D$ is the minimum number of colors needed in a coloring of $D$.\\

\qquad A \emph{coloring} of a graph is a coloring of its vertices such that no two adjacent vertices receive the same color. The \emph{chromatic number} $\chi(G)$ of a graph $G$ is the minimum number of colors needed in a coloring of $G$. \\
The above definitions of colorings of a graph/digraph are equivalent to the following.\\
Let $G=(V,E)$ be a graph or digraph of order $p$, and $V=\{v_1,v_2,\ldots,v_p\}$. A \emph{coloring} of $G$ with colors $c_1,c_2,...,$ is a sequence of ordered pairs $ (v_1,c_1), (v_2,c_2),...,(v_p,c_p)$ such that for $i<j$, $c_j \neq c_i$ if $v_iv_j$ is an edge/arc according as $G$ is a graph/digraph. The \emph{chromatic/dichromatic} number of $G$ is the minimum number of colors needed in a coloring of $G$ according as $G$ is a graph or a digraph.\\
\unitlength 1mm 
\linethickness{0.4pt}
\ifx\plotpoint\undefined\newsavebox{\plotpoint}\fi 
\begin{picture}(96.5,102)(0,0)
\put(51.5,96.25){\circle*{1.5}}
\put(32.75,71){\circle*{1.5}}
\put(70.5,71){\circle*{1.5}}
\put(41.625,83.625){\vector(-3,-4){.07}}\multiput(51,96.25)(-.0337230216,-.0454136691){556}{\line(0,-1){.0454136691}}
\put(51.25,71){\vector(1,0){.07}}\put(32.25,71){\line(1,0){38}}
\put(51.25,91){$v_1$}
\put(36.5,73.25){$v_2$}
\put(64.75,73.5){$v_3$}
\put(50.75,102){$1$}
\put(28.5,65.5){$2$}
\put(72.25,66){$3$}
\put(30,58.75){$s: (v_1,1) , (v_2,2),(v_3,3)$}
\put(19,41.5){\circle*{1.5}}
\put(1.75,18){\circle*{1.5}}
\put(35.75,18){\circle*{1.5}}
\put(10.375,29.875){\vector(-3,-4){.07}}\multiput(19.25,41.25)(-.0336812144,-.0431688805){527}{\line(0,-1){.0431688805}}
\put(18.375,18.5){\vector(1,0){.07}}\put(1.5,18.5){\line(1,0){33.75}}
\put(27.125,29.875){\vector(-3,4){.07}}\multiput(35.25,18.5)(-.0337136929,.0471991701){482}{\line(0,1){.0471991701}}
\put(18.5,35.25){$v_1$}
\put(7,21){$v_2$}
\put(30.75,20.75){$v_3$}
\put(19,46.5){$1$}
\put(0,14.5){$2$}
\put(37.5,15.75){$2$}
\put(-3,10){$s: (v_1,1),(v_3,2),(v_2,2)$}
\put(50.5,53.75){$(a)$}
\put(14,4){$(b)$}
\put(75.75,43.25){\circle*{1.5}}
\put(60,19.75){\circle*{1.5}}
\put(67.625,31.625){\vector(-2,-3){.07}}\multiput(75.25,43.25)(-.0337389381,-.0514380531){452}{\line(0,-1){.0514380531}}
\put(77.25,20.125){\vector(1,0){.07}}\multiput(60,20)(4.3125,.03125){8}{\line(1,0){4.3125}}
\put(75.5,38.75){$v_1$}
\put(65.25,22.75){$v_2$}
\put(88.75,23){$v_3$}
\put(75.5,47.75){$1$}
\put(56,16.5){$1$}
\put(96.5,17){$1$}
\put(55,10){$s:(v_3,1),(v_2,1),(v_1,1)$}
\put(76.25,4){$(c)$}
\put(45,0){$Figure 1$}
\put(61.25,83.5){\vector(3,-4){.07}}\multiput(52.25,95.75)(.0337078652,-.0458801498){534}{\line(0,-1){.0458801498}}
\put(85.125,31.875){\vector(3,-4){.07}}\multiput(76,43.25)(.0337338262,-.042051756){541}{\line(0,-1){.042051756}}
\put(93.75,20.75){\circle*{1.5}}
\end{picture}
\\

\qquad Figure 1 exhibits the colorings of the digraphs following the above rule. The sequences of ordered pairs indicate the order in which the vertices are colored using the above rule. We note that the dichromatic number of the $3$-cycle is 2, and that of semi-cycle of length 3 is 1. The above examples show that the number of colors needed to a color a digraph $D$ depends on the sequence of vertices we choose to color them.

\qquad It appears that there is no standard notion of digraph coloring as compared to the usual coloring of graphs. However, there are a few different notions of digraph coloring ( see for example \cite{G}, \cite{S}, \cite{Ah}, \cite{N}, \cite{ES}).\\

 For any definitions on graphs and digraphs, we refer the book \cite{F}. In this paper, we initiate a study of dichromatic number $\chi_d(D)$ of a digraph $D$, and obtain many results and bounds for $\chi_d(D)$ analogous to those of chromatic number of a graph.\\

\qquad Also, we introduce new coloring parameters of graphs and digraphs, called sequential coloring numbers, closely related to the chromatic and dichromatic numbers.\\\\
\section{Some Basic Results}
\begin{prop}
a) For a directed path $P_n$ on n vertices, $\chi_d(P_n)=1$. \\
b) For any directed cycle $C_n$ on n vertices, $n\geq3$, $\chi_d(C_n)=2$.\\
\end{prop}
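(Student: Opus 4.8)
The plan is to establish both parts by writing down explicit coloring sequences for the upper bounds and giving a one-line forcing argument for the lower bound in part (b). For part (a), label the vertices of $P_n$ as $v_1, v_2, \ldots, v_n$ with arcs $v_iv_{i+1}$ for $1 \le i \le n-1$. I would color the vertices in the reverse order $v_n, v_{n-1}, \ldots, v_1$, assigning every vertex the single color $1$. The only thing to check is that this obeys the coloring rule: for each arc $v_iv_{i+1}$ the head $v_{i+1}$ is colored strictly before the tail $v_i$, so the clause ``if the tail is colored first, the head must differ'' never applies, and monochromaticity causes no conflict. Hence $\chi_d(P_n) \le 1$, and since at least one color is always needed, $\chi_d(P_n) = 1$.

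For part (b), write $C_n$ with vertices $v_1, \ldots, v_n$ and arcs $v_iv_{i+1}$ with indices taken mod $n$ (so that $v_nv_1$ is an arc). For the upper bound, I would color in the forward order $v_1, v_2, \ldots, v_n$, giving $v_i$ the color $1$ if $i$ is odd and the color $2$ if $i$ is even. For each $i$ with $2 \le i \le n$ the arc $v_{i-1}v_i$ has its tail already colored and its head receives the other color, so the rule is satisfied; the only remaining arc is $v_nv_1$, whose tail $v_n$ is colored last, after its head $v_1$, so the rule imposes no constraint there, even when $n$ is odd and $v_n$, $v_1$ both carry color $1$. Thus $\chi_d(C_n) \le 2$.

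For the lower bound, suppose for contradiction that a single color suffices, so every vertex gets color $1$ in some coloring sequence. Let $v_k$ be the vertex that appears first in the sequence. The arc $v_kv_{k+1}$ then has its tail $v_k$ colored before its head $v_{k+1}$, so by the rule $v_{k+1}$ must receive a color different from that of $v_k$ --- impossible with only one color available. Hence $\chi_d(C_n) \ge 2$, and together with the upper bound this gives $\chi_d(C_n) = 2$. I do not anticipate a real obstacle here; the only points needing care are the bookkeeping inherent in the ordered-pair formalism --- specifically, verifying that the wrap-around arc $v_nv_1$ never forces a third color, and keeping in mind that the constraint in the definition fires only when the tail precedes the head in the coloring order.
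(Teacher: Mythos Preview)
Your proof is correct and for part (a) is identical to the paper's. For part (b) the paper uses a slightly different coloring order---it first colors $v_1$ and $v_n$ with color $1$, then colors $v_2,\ldots,v_{n-1}$ alternately with $2$ and $1$---but this is a cosmetic variation of the same idea; your forward alternating sequence works just as well, and you improve on the paper by supplying the lower-bound argument (that no vertex can be first in a monochromatic sequence), which the paper omits.
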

\begin{proof} a) Let $v_1v_2 \ldots v_n$  be a directed path. Color the vertices in the order $v_n,v_{n-1},\ldots,v_1$ with color 1. \\
b)Let $C_n:v_1v_2 \ldots v_n$ be a directed cycle. First color $v_1$ and $v_n$ with color 1, and then color the vertices $v_2,v_3,\ldots,v_{n-1}$ alternately with colors 2 and 1.\end{proof}
\begin{prop} If $C_n:v_1v_2 \ldots v_n$ is a semi-cycle which is not a directed cycle, then $\chi_d(C_n)=1$.\end{prop}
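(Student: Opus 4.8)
The plan is to produce an explicit coloring of $C_n$ that uses only the single color $1$; since $\chi_d(C_n)\ge 1$ trivially, this will give $\chi_d(C_n)=1$. The first thing to notice is what a one-color coloring is, in the language of the introduction: a coloring of a digraph $D$ with the single color $1$ is exactly an ordering $x_1,x_2,\dots,x_n$ of $V(D)$ such that whenever $x_ix_j$ is an arc one has $j<i$ — that is, every arc runs from a later-listed vertex to an earlier-listed one (equivalently, each vertex is colored only after all of its out-neighbours). So the whole task reduces to producing such an ordering of the vertices of $C_n$.

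Next I would observe that $C_n$ has no directed cycle at all. Any directed cycle of $C_n$ is, in particular, a cycle of the underlying graph; but the underlying graph of a semi-cycle $C_n$ is a single $n$-cycle, whose only cycle is itself. Hence a directed cycle in $C_n$ would have to use all $n$ edges consistently oriented, i.e. $C_n$ would be a directed cycle, contrary to hypothesis. Therefore $C_n$ is acyclic, and I can now invoke the standard fact that an acyclic digraph admits an ordering of the required kind: an acyclic digraph has a vertex of out-degree $0$ (otherwise, following out-arcs from any vertex, finiteness forces a repeated vertex and hence a directed cycle), so one repeatedly deletes such a vertex and lists the deleted vertices as $x_1,x_2,\dots,x_n$; each deleted vertex has all its out-neighbours still present, hence listed later, so all arcs run backwards as needed. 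Coloring the vertices of $C_n$ in this order, each receives color $1$ without violating the rule, completing the proof.

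The only genuinely delicate point is the reduction in the first paragraph — recognizing that a one-color coloring is precisely a reverse topological order — and the routine but essential step that failing to be a directed cycle already forces the whole orientation to be acyclic; everything past that is the textbook existence of a topological ordering. I would also remark that this proposition is the special case $D=C_n$ of the later result that $\chi_d(D)=1$ for every acyclic $D$. If one prefers to avoid any mention of topological sorting, an alternative is to note that, since $C_n$ is not a directed cycle, it has a source $v$ (in an orientation of a cycle the numbers of sources and sinks are equal and, as not all vertices are transit vertices, at least one of each exists); deleting one out-arc $v\to w$ of $v$ turns $C_n$ into an orientation of a path with $v$ an end-vertex of in-degree $0$, that path is colored with one color by the same ordering principle with $v$ placed last, and the deleted arc $v\to w$ remains legal because $w$ is then colored before $v$.
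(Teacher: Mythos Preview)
Your argument is correct and takes a genuinely different route from the paper. The paper's proof is a hands-on construction: it locates a sink $v_1$ of $C_n$, then walks around the cycle, peeling off maximal directed subpaths one at a time and coloring each in reverse (head first, tail last) with color~$1$. Your approach is more conceptual: you observe that a one-color coloring is exactly a reverse topological ordering, that a non-directed semi-cycle is acyclic because the underlying $n$-cycle has only itself as a cycle, and then invoke the standard existence of a topological order. This is cleaner and, as you note, already proves the later Theorem~7 ($\chi_d(D)=1$ for every acyclic $D$) in one stroke---whereas the paper reaches Theorem~7 only after separate lemmas on trees and on reordering $1$-colorings. What the paper's proof buys is an explicit, cycle-specific recipe that does not presuppose the reader knows topological sorting.

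One small slip to fix: in your description of the topological sort you write that the deleted vertex ``has all its out-neighbours still present, hence listed later.'' That is backwards. A vertex chosen because it has out-degree~$0$ in the current digraph has all its original out-neighbours \emph{already deleted}, hence listed \emph{earlier}; that is precisely why every arc $x_ix_j$ ends up with $j<i$. The conclusion you draw is the right one, but the sentence justifying it needs to be flipped. Your alternative argument via deleting an out-arc at a source is also fine, and is in spirit closer to the paper's sink-based construction.
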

\begin{proof} Clearly, $C_n$ has a sink say $v_1$, where $v_2v_1$ and $v_nv_1$ are arcs.  Let $v_1v_2 \ldots v_i$, $2\leq i \leq n-1$ be a maximal directed path. We color the vertices $v_{i-1},v_{i-2},\ldots,v_2$ by the color 1, in this order. Now, we look for a maximal directed path from $v_i$ to $v_j$, where $i<j$, and color the vertices $v_j,v_{j-1},\ldots,v_i$ by the color 1, in this order. Note that $v_j$ may be $v_i$. If some more vertices are left out, we look for a vertex $v_k$, $j<k$, such that $v_kv_{k-1}...v_j$ is a maximal directed path. We color the vertices of this path starting from $v_j$ to $v_k$. We continue this process and color all vertices of $C_n$ with color 1.\end{proof}
\textbf{Remark 1}: Given a sequence of vertices of a digraph $D$ it is easy to find the minimum number of colors needed to color the vertices in the order given by the sequence. But, given a coloring of a digraph, it may be difficult in general to find a sequence of vertices which gives that coloring.\\

For a digraph $D$, the \emph{underlying graph} of $D$, denoted by $G(D)$ has the same vertex set as $D$, where two vertices $u$ and $v$ are adjacent if $uv$ or $vu$ is an arc in $D$.\\

Clearly, for any digraph $D$, $$\chi_d(D)\leq \chi(G(D)),$$ where $\chi(G(D))$ is the chromatic number of the graph $G(D)$. Further, $\chi_d(D)=\chi(G(D))$ if, and only if, every arc in $D$ is symmetric.
\section{Bounds for dichromatic number}
\qquad In a digraph $D$, the \emph{indegree} $ind(v)$ is the number of arcs having $v$ as their head, and the \emph{outdegree} $od(v)$ is the number of arcs having $v$ as their tail.\\

\qquad For a digraph $D=(V,E)$, let $\Delta_{in}(D)=$ $ max\{ind(v): v \in V\}$, and $\Delta_{od}(D)=$ $ max\{od(v): v \in V\}$. \\

\begin{prop} For any digraph $D$ of order $p$ without symmetric arcs $$\chi_d(D) \leq p- \Delta_{in}(D).$$ \end{prop}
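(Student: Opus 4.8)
The plan is to exhibit an explicit colouring, that is, a vertex ordering together with a colour assignment, using at most $p-\Delta_{in}(D)$ colours. Let $v$ be a vertex attaining $ind(v)=\Delta_{in}(D)$, write $\Delta=\Delta_{in}(D)$, and let $N^-(v)=\{u_1,\dots,u_\Delta\}$ be the set of in-neighbours of $v$, so that each $u_iv$ is an arc of $D$. Since $D$ has no symmetric arcs, none of the reverse arcs $vu_i$ belongs to $D$; this is the only place the hypothesis enters. The strategy is to colour $v$ first, then all of $N^-(v)$, then the remaining $p-1-\Delta$ vertices, in such a way that $v$ and every vertex of $N^-(v)$ receive one and the same colour while each of the other vertices receives a colour of its own, for a total of $1+(p-1-\Delta)=p-\Delta$ colours.

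In detail, I would start the sequence with $(v,1)$. Because $v$ is coloured before each $u_i$, and the arc joining them is $u_iv$ with head $v$ (while $vu_i$ is absent), this arc places no restriction on the colour later given to $u_i$; so as far as $v$ is concerned each $u_i$ may also be assigned colour $1$. Next I would append the vertices of $N^-(v)$ in an order chosen so that every arc of $D$ with both ends in $N^-(v)$ runs backward in the sequence --- equivalently, in a reverse topological order of the induced sub-digraph $D[N^-(v)]$ --- and give each of them colour $1$; backward arcs impose no constraint, so this is legitimate. Finally I would append the remaining $p-1-\Delta$ vertices in any order, giving them the pairwise distinct new colours $2,3,\dots,p-\Delta$; a vertex receiving a brand-new colour can never violate the rule. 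It then remains only to verify that the resulting sequence of ordered pairs is a colouring in the sense of the paper and to count that exactly $p-\Delta$ colours were used.

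The delicate point --- and the step I expect to be the real obstacle --- is the middle one: justifying that all of $N^-(v)$ can keep the single colour $1$. Assigning colour $1$ to a vertex $x\in N^-(v)$ is blocked precisely when some already-coloured in-neighbour of $x$ lying in $N^-(v)$ also carries colour $1$, so one wants to process $N^-(v)$ with all of its internal arcs pointing backward. This is immediate when $D[N^-(v)]$ is acyclic (cf. Proposition 1(a) and the acyclic case of $\chi_d$), and then the argument above goes through verbatim. When $D[N^-(v)]$ contains a directed cycle a single colour no longer suffices on $N^-(v)$, and the counting has to be reorganised --- for instance by choosing more cleverly which colour each in-neighbour reuses, or which vertices are "saved" --- so that the total is still held to $p-\Delta_{in}(D)$; making that bookkeeping work is where the substance of the proof lies.
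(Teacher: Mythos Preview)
Your approach is exactly the one the paper takes: pick a vertex $v$ of maximum in-degree, colour $v$ first with colour $1$, then colour every in-neighbour of $v$ with colour $1$, and finally give each of the remaining $p-1-\Delta_{in}(D)$ vertices a fresh colour. The paper's proof simply asserts the middle step without discussing the order in which the in-neighbours are processed, whereas you correctly notice that this step only goes through verbatim when $D[N^{-}(v)]$ admits a reverse topological ordering, i.e.\ when it is acyclic.

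The difficulty you isolate in the final paragraph is not merely bookkeeping, however: it cannot be repaired, because the proposition as stated is false once $D[N^{-}(v)]$ contains a directed cycle. Take $V=\{v,u_1,u_2,u_3\}$ with arcs $u_1v,\,u_2v,\,u_3v$ together with the $3$-cycle $u_1u_2,\,u_2u_3,\,u_3u_1$. There are no symmetric arcs, $p=4$, and $\Delta_{in}(D)=ind(v)=3$, so $p-\Delta_{in}(D)=1$. Yet in any vertex ordering some $u_i$ is preceded by its in-neighbour $u_{i-1}$ on the $3$-cycle, forcing $u_i$ to receive a second colour; hence $\chi_d(D)=2>1$ (cf.\ Proposition~1(b)). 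So the ``reorganised counting'' you hope for in the cyclic case cannot exist, and the step you flag is a genuine gap in both the paper's argument and your own proposal. Your instinct to treat the acyclic case separately is exactly right; what is missing is an additional hypothesis (for instance, that $D$ itself, or at least $D[N^{-}(v)]$, be acyclic) under which the bound actually holds.
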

\begin{proof} Let $v$ be a vertex with $ind(v)= \Delta_{in}(D)$. First, color $v$ with color 1, and then color all tails of arcs having $v$ as their head, with color 1. Thus, after coloring $\Delta_{in}(D)+1$ vertices with color 1, the rest of the vertices can be colored with $p- \Delta_{in}(D)+1$ distinct colors. Thus, all vertices in $D$ can be colored with $p- \Delta_{in}(D)$ colors, and $\chi_d(D) \leq p- \Delta_{in}(D)$.\end{proof}

\qquad Note that Proposition 3 is not true if $D$ has symmetric arcs. For example, in Figure 2, $\Delta_{in}(D) = 2$, $\chi_d(D) = 2$, and $p - \Delta_{in}(D) =1$.\\

\unitlength 1mm 
\linethickness{0.4pt}
\ifx\plotpoint\undefined\newsavebox{\plotpoint}\fi 
\begin{picture}(66,62)(0,0)
\put(54.25,55.5){\circle*{1.5}}
\put(37.5,26.5){\circle*{1.5}}
\put(69.75,26.75){\circle*{1.5}}
\put(46,41.125){\vector(1,2){.07}}\multiput(37.75,26.5)(.0336734694,.0596938776){490}{\line(0,1){.0596938776}}
\put(62,41.25){\vector(1,-2){.07}}\multiput(54.25,55.75)(.0336956522,-.0630434783){460}{\line(0,-1){.0630434783}}
\put(53.875,26.375){\vector(1,0){.07}}\multiput(38,26.25)(3.96875,.03125){8}{\line(1,0){3.96875}}
\put(54.25,16.5){\vector(-1,0){.07}}\qbezier(69.5,27)(54.875,6.25)(37.75,26.5)
\put(54.25,62){$v_1$}
\put(29.25,24.25){$v_2$}
\put(76,24){$v_3$}
\put(64.5,29.25){$1$}
\put(43,29.25){$2$}
\put(54.25,49.25){$1$}
\put(43.5,-5){$Figure$ $2$}
\put(30,5){$s:(v_3,1), (v_2,2), (v_1,1)$}
\end{picture}
\\

A set $S$ of vertices in a digraph $D$ is \emph{independent} if there is no arc between any two vertices in $S$.\\\\
One can obtain a stronger result.\\
\begin{prop} Let $D=(V,E)$ be a digraph of order $p$ without symmetric arcs, and $S$ be an independent set of vertices. Then $$\chi_d(D) \leq p- \sum_{v \in S} ind(v) + \mid S \mid .$$ \end{prop}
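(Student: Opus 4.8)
The plan is to generalize the proof of Proposition 3, using the whole independent set $S$ in place of a single vertex of maximum indegree. Since $S$ is independent, there is no arc joining two vertices of $S$; hence we may start a coloring of $D$ by assigning color $1$ to every vertex of $S$, in any order whatsoever, and the coloring rule is never violated at this stage. (When $|S|=1$ this is exactly the opening move of Proposition 3, so the argument below should reduce to that one.)

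Next, for each $v\in S$ I would color, with color $1$, all the tails of arcs whose head is $v$, doing this after all of $S$ has been colored. Here the hypothesis that $D$ has no symmetric arcs is what makes color $1$ admissible: if $uv$ is an arc with $v\in S$ then $vu$ is not an arc, so when $u$ is colored the rule cannot force $u$ to differ from $v$. Ordering the tails suitably, one argues that color $1$ may be used for all of them; after this stage $|S|+\sum_{v\in S} ind(v)$ vertices carry color $1$. The remaining at most $p-|S|-\sum_{v\in S} ind(v)$ vertices are then given pairwise distinct fresh colors $2,3,\dots$, one per vertex. This yields a coloring with at most $1+\bigl(p-|S|-\sum_{v\in S} ind(v)\bigr)=p-\sum_{v\in S} ind(v)-|S|+1\le p-\sum_{v\in S} ind(v)+|S|$ colors, which is the asserted bound (the last inequality holding whenever $|S|\ge 1$, while for $S=\emptyset$ the bound is just $\chi_d(D)\le p$).

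The step that really needs care — and where I expect the main obstacle to lie — is the claim that all the tails of arcs entering $S$ can \emph{simultaneously} receive color $1$, together with the claim that these tails number $\sum_{v\in S} ind(v)$. For the first one must control conflicts that arise within this set of tails (two tails joined by an arc, one colored before the other) and conflicts produced by directed paths $w\to u\to v$ with $w,v\in S$, since then $w$ already has color $1$ and blocks $u$; the ordering of the tails in the coloring sequence has to be chosen to neutralize these. For the second, one must ensure the in-neighborhoods $N^-(v)$, $v\in S$, contribute distinct vertices and none of them lies in $S$ — i.e., that overlaps between different in-neighborhoods do not deflate the count. Managing the interaction between the overlapping in-neighborhoods of the vertices of $S$ and the order in which the tails are colored is the genuine work; once past it, the assignment of fresh colors to the leftover vertices is the same routine bookkeeping as in Proposition 3.
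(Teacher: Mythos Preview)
The paper states Proposition~4 without proof (it is announced as ``a stronger result'' right after Proposition~3 and the text then moves on), so there is no argument in the paper to compare yours against; I can only assess your plan on its own merits.

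Your strategy actually targets the sharper inequality $\chi_d(D)\le p-\sum_{v\in S}ind(v)-|S|+1$ and only afterwards relaxes it to the stated bound. The obstacles you flag in your last paragraph are not merely technical: they are fatal to that sharper inequality, and hence to this line of attack. Take $V=\{v,u_1,u_2,u_3\}$ with arcs $u_iv$ for $i=1,2,3$ together with $u_1u_2$, $u_2u_3$, $u_3u_1$; there are no symmetric arcs. With $S=\{v\}$ one has $p=4$, $|S|=1$, $\sum_{w\in S}ind(w)=3$, so your intermediate bound would give $\chi_d(D)\le 1$, yet the directed triangle on $u_1,u_2,u_3$ forces $\chi_d(D)=2$. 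Here the tails of the arcs entering $v$ cannot all receive color~$1$ in any order---precisely the phenomenon you anticipated. (The same example, incidentally, shows that the paper's proof of Proposition~3 already has this gap.) Your counting worry is equally real: distinct vertices of $S$ may share an in-neighbor, so $|S|+\sum_{v\in S}ind(v)$ overcounts the vertices you propose to color with~$1$, and the number of leftover vertices is then \emph{larger}, not smaller, than $p-|S|-\sum_{v\in S}ind(v)$.

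In short, you have correctly diagnosed why the direct extension of Proposition~3 does not go through, but that diagnosis means the plan as written does not establish even the paper's weaker bound. A proof of Proposition~4 along these lines would need a genuinely new ingredient---perhaps spending $|S|$ colors rather than one on $S\cup N^{-}(S)$, together with an explicit treatment of directed cycles among the in-neighbors and of overlaps between the sets $N^{-}(v)$.
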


\qquad A digraph $D$ is a \emph{tree} if the underlying graph $G(D)$ of $D$ is a tree. A vertex $v$ in a tree $D$ is a pendant vertex if it is a pendant vertex in $G(D)$. \\

We now prove that all acyclic digraphs have dichromatic number equal to 1. First we prove,\\
\begin{lemma}If a digraph $D$ is a tree, then $\chi_d(D)=1$.
\end{lemma}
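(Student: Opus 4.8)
The plan is to argue by induction on the order $p$ of the tree $D$, using a pendant vertex to reduce to a smaller tree. For $p=1$ the claim is trivial. For the inductive step, let $v$ be a pendant vertex of $D$, so that $v$ has a unique neighbour $u$ in the underlying graph $G(D)$; the arc between them is either $uv$ or $vu$ (it cannot be symmetric, since then $G(D)$ would have a multi-edge and $D$ would contain a $2$-cycle, contradicting that a tree's underlying graph is a tree). Let $D' = D - v$, which is again a tree, so by the induction hypothesis $D'$ has a coloring with a single color, say color $1$; fix the sequence $s'$ of ordered pairs $(w,1)$ realizing it.

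The key step is to splice $v$ into this sequence at the right moment so that the one-color coloring extends. If the arc is $vu$ (so $v$ is the tail), then by the coloring rule $v$ may freely receive the color of its head $u$ as long as $v$ is colored \emph{after} $u$; so append $(v,1)$ at the very end of $s'$. If instead the arc is $uv$ (so $v$ is the head), then $v$ must receive a color different from $u$ only if $u$ is colored before $v$; so we instead place $(v,1)$ at the very \emph{beginning} of the sequence, before $u$ (indeed before everything), which is legitimate because $v$'s only neighbour is $u$ and $u$ comes later. In both cases the resulting sequence colors all of $V(D)$ with color $1$ and respects the rule on every arc, so $\chi_d(D) = 1$.

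The main obstacle is really just the bookkeeping of \emph{where} in the existing sequence the new vertex is inserted, together with the observation that a tree digraph has no symmetric arcs (otherwise its underlying graph, with a repeated edge, fails to be a tree) — this is what guarantees that the single pendant arc is one-directional and can be handled by the "color the head first, or the tail last" trick. An alternative, perhaps cleaner, formulation avoids induction entirely: orient the coloring by repeatedly stripping pendant vertices. Order the vertices $v_p, v_{p-1}, \ldots, v_1$ so that each $v_i$ is a pendant vertex of the tree $D - \{v_p, \ldots, v_{i+1}\}$; then color in the order $v_1, v_2, \ldots, v_p$ (i.e.\ reverse the stripping order) if we think of it correctly, or more carefully: for each newly added pendant vertex, if its unique incident arc points \emph{into} it from the already-colored part we should have colored it earlier — so the safe universal rule is to process vertices in an order in which, whenever $v_i v_j$ is an arc with $i<j$ (so $v_i$ colored before $v_j$), this never forces a conflict because all vertices get color $1$ and we only need to ensure no such arc exists with the tail strictly before the head, which we can arrange by choosing the pendant-stripping order so that each pendant vertex that is a \emph{head} of its incident arc is placed first. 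I would present the induction version, as it makes the case split transparent and keeps the argument short.
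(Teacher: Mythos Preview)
Your proof is correct and follows essentially the same approach as the paper: induction on the order, deletion of a pendant vertex $v$, and a two-case split on the direction of the unique incident arc. The only cosmetic difference is in the $uv$ case (where $v$ is the head): you prepend $(v,1)$ to the entire sequence, whereas the paper instead removes $u$'s color, colors $v$, and then recolors $u$---in effect inserting $v$ immediately before $u$ rather than at the front---but both placements work for the same reason, since $v$'s only incident arc is $uv$.
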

\begin{proof} We prove the result by induction on the number of vertices in $D$. Clearly, the result is true if $D$ has 2 or 3 vertices. Suppose it is true if $D$ has $p$ vertices where $p\geq 3$, and let $D$ be a tree with $p+1$ vertices. There is a pendant vertex $v$ in $D$ such that for some other vertex $u$ in $D$, either $uv$ or $vu$ is an arc.\\
\emph{Case 1}. \emph{$vu$ is an arc}. \\
By inductive hypothesis, all vertices in $D-v$ can be colored with the same color, say 1. Now, giving color 1 to $v$ in $D$, one can obtain a coloring of $D$ in which all vertices have the same color 1.\\
\emph{Case 2}. \emph{$uv$ is an arc}.\\
Consider a 1-coloring of $D-v$. In this coloring, $u$ is given color 1. Remove the color given to $u$. Color $v$ with color 1, and then give color 1 to $u$. This gives a 1- coloring of $D$. This completes the proof.
\end{proof}

Let $D$ be a digraph, and $u$, $v$ be two vertices in $D$, such that $uv$ is not an arc in $D$. The digraph obtained by adding the arc $uv$ to $D$ is denoted by $D+uv$.\\
\begin{lemma} Let $u$, $v$ be two vertices in a digraph $D$ such that $\chi_d(D)=1$, and there is no directed path from $u$ to $v$. Then there exists a 1-coloring of $D$ such that $u$ is colored first and then $v$.\end{lemma}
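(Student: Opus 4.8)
The plan is to use the combinatorial description of colorings from the introduction: a $1$-coloring of $D$ is precisely a linear ordering $w_1,w_2,\ldots,w_p$ of $V(D)$ in which no arc points forward, i.e.\ for every arc $ab$ the head $b$ is listed before the tail $a$. First I would record the elementary monotonicity fact that whenever there is a directed path from $a$ to $b$ in $D$, every $1$-coloring colors $b$ before $a$ (chain the single-arc condition along the path). Since $\chi_d(D)=1$, fix one $1$-coloring $\sigma$ of $D$.

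Next, let $B$ be the set of vertices reachable from $u$ by a directed path, together with $u$ itself; the hypothesis that there is no directed path from $u$ to $v$ says exactly that $v\notin B$. The key structural observation is that $B$ is closed under out-arcs: if $w\in B$ and $wx$ is an arc, then $x$ is reachable from $u$, so $x\in B$. Hence every arc joining $B$ to $\bar B:=V(D)\setminus B$ has its tail in $\bar B$ and its head in $B$. I would then form a new sequence $\sigma'$ by first listing the vertices of $B$ in the order they occur in $\sigma$, and then the vertices of $\bar B$ in the order they occur in $\sigma$. One checks $\sigma'$ is again a valid $1$-coloring: for an arc with both ends in $B$, or both ends in $\bar B$, the relative order is inherited from $\sigma$; for an arc joining the two parts, the observation above forces the tail into $\bar B$ and the head into $B$, and in $\sigma'$ all of $B$ precedes all of $\bar B$, so the head still precedes the tail. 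Finally $u\in B$ and, by the monotonicity fact, every other vertex of $B$ is colored before $u$ in $\sigma$, so $u$ is the last vertex of the $B$-block of $\sigma'$; since $v\in\bar B$ it lies in the $\bar B$-block and is therefore colored after $u$, which is what we want.

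The construction is otherwise routine; the one step that genuinely needs care is the verification that the rearranged sequence $\sigma'$ is still a legal $1$-coloring, and this rests entirely on the fact that no arc leaves $B$, so that is the point I would spell out most carefully. The auxiliary monotonicity statement is also worth isolating, since it is exactly what pins $u$ to the end of the $B$-block (and it incidentally disposes for free of the case where there happens to be a directed path from $v$ to $u$).
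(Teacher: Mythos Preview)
Your argument is correct, and it is genuinely different from the paper's. The paper proceeds by induction on the number of vertices: it disposes of the case where $D$ is a semi-path by inspection, and otherwise deletes a vertex $w$ not lying on any $u$--$v$ semi-path, applies the inductive hypothesis to $D-w$, and then reinserts $w$ by a small local repair (treating separately the cases where $w$ has no in-arcs and where $w$ has in-neighbours $w_1,\ldots,w_k$, in which case one uncolours the $w_i$, colours $w$, then recolours the $w_i$).

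Your approach is more direct and more structural: you never touch induction, you simply take any $1$-coloring $\sigma$ and rearrange it by moving the out-reachability set $B$ of $u$ to the front. The single observation that $B$ is closed under out-arcs does all the work, and the verification that $\sigma'$ is still a $1$-coloring is clean and uniform. This buys you a shorter proof that avoids the case analysis and the somewhat delicate reinsertion step of the paper's argument; it also makes transparent why the hypothesis ``no directed path from $u$ to $v$'' is exactly what is needed, since it is literally the statement $v\notin B$. The only superfluous part of your write-up is the remark that $u$ is the last element of the $B$-block: you only need $u\in B$ and $v\in\bar B$ to conclude, so that observation (while true) can be dropped.
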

\begin{proof} If $D$ is a semi-path, one can easily see that this is true. Otherwise, we prove the result by induction on the number $p$ of vertices in $D$ . The result is true if $D$ has 3 or 4 vertices. Let the result be true for all digraphs $D$ with $\chi_d(D)=1$ with $p$ vertices where $p\geq4$.\\

 Suppose $D'$ is a digraph of order $p+1$ with $\chi_d(D')=1$. Let $u$, $v$ be two vertices in $D'$ such that there is no directed path from $u$ to $v$ in $D'$. Suppose $w$ be a vertex in $D'$ which is not on a $u-v$ semi-path. The digraph $D=D'-w$ has $p$ vertices, and $\chi_d(D)=1$. By inductive hypothesis, there exists a 1-coloring of $D$ such that $u$ is given color 1 before $v$ is colored.\\
 \emph{Case 1}. $w$ is not the head of any arc in $D'$. In this case, any 1-coloring of $D$ can be extended to 1- coloring of $D'$ by giving color 1 to $w$, where $u$ is colored first, and then $v$.\\
 \emph{Case 2}. The vertex $w$ is the head of arcs say, $w_1w,w_2w,\ldots,w_kw$. Consider a 1-coloring of $D$ in which $u$ is colored first before $v$. In this coloring, remove the colors of the vertices $w_1,w_2,\ldots,w_k$. Then in $D'$, give color 1 to $w$, and then color the vertices $w_1,w_2,\ldots,w_k$. This gives the required coloring of $D'$.
\end{proof}
\begin{theorem}
 Let $D$ be an acyclic digraph. Then $\chi_d(D)=1$.
\end{theorem}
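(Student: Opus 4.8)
The plan is to argue by strong induction on the number of arcs of $D$, using the extension lemma (Lemma~6, the statement immediately preceding the theorem) to transfer a $1$-coloring from a subdigraph to $D$ itself. For the base case, if $D$ has no arcs then listing the vertices in any order and giving each of them colour $1$ is a valid coloring; more generally, if the underlying graph $G(D)$ is a forest then $D$ is a disjoint union of trees and Lemma~5 already yields $\chi_d(D)=1$. So assume $D$ has at least one arc and that the statement holds for every acyclic digraph with fewer arcs than $D$.

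Choose any arc $uv$ of $D$ and set $D'=D-uv$. Deleting an arc cannot create a directed cycle, so $D'$ is again acyclic, and it has exactly one arc fewer than $D$; hence $\chi_d(D')=1$ by the induction hypothesis. The key observation is that \emph{there is no directed path from $v$ to $u$ in $D'$}: such a path together with the arc $uv$ would close up to a directed cycle in $D$, contradicting acyclicity. Therefore Lemma~6, applied with its two distinguished vertices taken to be $v$ and $u$ in that order, supplies a $1$-coloring of $D'$ in which $v$ is coloured before $u$.

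It then only remains to check that this coloring is already a valid $1$-coloring of $D$. Writing it as a sequence of ordered pairs, the colour condition for $D$ asks, for each arc $xy$, that the head $y$ precede the tail $x$ in the sequence. Every arc of $D$ other than $uv$ lies in $D'$ and is therefore already handled; for the arc $uv$ itself the coloring was chosen precisely so that its head $v$ is coloured before its tail $u$. Hence the coloring is valid on $D$, every vertex carries colour $1$, and $\chi_d(D)=1$, which closes the induction.

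The only real subtlety I anticipate is keeping the orientations straight: a $1$-coloring of a digraph forces the head of every arc to be coloured before its tail, so when invoking Lemma~6 one must pass it the pair $(v,u)$ and verify the ``no directed path'' hypothesis in the direction from $v$ to $u$, not from $u$ to $v$. As a sanity check, there is also a direct one-line argument avoiding the induction: an acyclic digraph admits a topological ordering $v_1,\dots,v_p$ in which every arc runs from a smaller to a larger index, so colouring the vertices in the reverse order $v_p,v_{p-1},\dots,v_1$, all with colour $1$, obeys the rule because the head of each arc is coloured before its tail.
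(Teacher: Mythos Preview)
Your main argument is correct and follows the same line as the paper: delete arcs, then use Lemma~6 to re-insert each arc with its head coloured before its tail. The only cosmetic difference is that the paper first strips $D$ down to a spanning tree (invoking Lemma~5 for the base case) and then adds the removed arcs back one by one, whereas your induction on the number of arcs peels off a single arc per step; your version is marginally cleaner and sidesteps the implicit connectedness assumption in the spanning-tree formulation. Your closing topological-sort observation is a genuinely different and more elementary route that bypasses Lemmas~5 and~6 altogether.
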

\begin{proof} The result is true if $D$ is a tree (Lemma 5). Suppose, $D$ is not a tree. Let $D'$  be the spanning tree obtained from $D$ by removing arcs $u_1v_1,u_2v_2,\ldots,u_kv_k$. In the tree $D'$, there is no directed path from $v_k$ to $u_k$. For otherwise, we have a directed cycle in $D$, which is not true. By Lemma 5, there is a 1-coloring of $D'$ such that $v_k$ is colored first, and then $u_k$. The same 1-coloring of $D'$ gives a 1-coloring of $D''=D'+u_kv_k$. Putting back the arcs $u_{k-1}v_{k-1},\ldots,u_1v_1$ to $D''$ one by one, to get back $D$, we find with the same type of argument, $\chi_d(D)=1$.
\end{proof}
\section{$c$-Independent Sets}
\qquad Consider a coloring $C$ of a digraph $D=(V,E)$. A set $S$ of vertices in $D$ is \emph{color independent} or \emph{c-independent} with respect to $C$ if all vertices in $S$ have the same color. The \emph{c-independence number} $\beta_{oc}(D)$ of $D$ is the maximum cardinality of a $c$-independent set.\\

One can also define the dichromatic number $\chi_d(D)$ of a digraph $D$ as the minimum order of a partition of the vertex set $V$ of $D$ into $c$-independent sets. Such a partition is called \emph{dichromatic partition} of $D$.\\

As in graphs, one can easily establish the following result.\\
\begin{prop}For a digraph $D$ of order $p$,
$$\frac{p}{\beta_{oc}(D)} \leq \chi_d(D)\leq p-\beta_{oc}(D)+1.$$\end{prop}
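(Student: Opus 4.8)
The plan is to follow the classical graph argument for $n/\alpha(G)\le\chi(G)\le n-\alpha(G)+1$, using the reformulation of $\chi_d(D)$ recorded just before the statement: it is the least number of classes in a partition of $V$ into $c$-independent sets. Both inequalities are short; the only feature that distinguishes the digraph case from the graph case is that a bare assignment of colors to vertices is a legitimate coloring only when it can be realized by \emph{some} ordering of the vertices, so in the upper bound I will have to exhibit such an ordering rather than merely a color function.

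For the lower bound I would fix a dichromatic partition $V=S_1\cup S_2\cup\cdots\cup S_k$ with $k=\chi_d(D)$, arising from a coloring $C$ in which $S_i$ is the class of color $i$. Each $S_i$ is $c$-independent with respect to $C$, so by the definition of $\beta_{oc}(D)$ we have $|S_i|\le\beta_{oc}(D)$ for every $i$. Summing, $p=|S_1|+\cdots+|S_k|\le k\,\beta_{oc}(D)=\chi_d(D)\,\beta_{oc}(D)$, i.e. $p/\beta_{oc}(D)\le\chi_d(D)$.

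For the upper bound, by definition of $\beta_{oc}(D)$ there is a coloring $C$ of $D$, realized by a sequence $(w_1,c'_1),(w_2,c'_2),\ldots,(w_p,c'_p)$, together with a $c$-independent set $S$ with respect to $C$ of size $\beta_{oc}(D)$; all vertices of $S$ share one color, say $1$. I then recolor $D$ \emph{along the same sequence} $w_1,\ldots,w_p$: keep color $1$ on each vertex of $S$, and give every one of the $p-\beta_{oc}(D)$ vertices outside $S$ a private new color. To see this is again a coloring, check the rule for an arc $w_iw_j$ with $i<j$: if both $w_i,w_j\in S$ there is no such arc at all (it would contradict the validity of $C$, in which $w_i$ and $w_j$ both carry color $1$); and if at least one of $w_i,w_j$ lies outside $S$, that endpoint carries a color used by nobody else, so the two colors differ. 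Hence we obtain a coloring of $D$ with $1+(p-\beta_{oc}(D))=p-\beta_{oc}(D)+1$ colors, giving $\chi_d(D)\le p-\beta_{oc}(D)+1$.

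I do not expect a genuine obstacle: the lower bound is immediate from counting, and the only delicate point in the upper bound is the verification that the recolored assignment remains realizable, which is handled by recycling the ordering that already witnesses $C$. A more conceptual variant of the upper bound is available too: restricting a sequence that realizes $C$ to $S$ produces an ordering of $S$ under which no arc of $D[S]$ is directed forward, so $D[S]$ is acyclic and hence $\chi_d(D[S])=1$ by Theorem~7; combining a $1$-coloring of $D[S]$ with one fresh color per vertex of $V\setminus S$ again uses $p-\beta_{oc}(D)+1$ colors.
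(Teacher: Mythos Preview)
Your proof is correct and follows precisely the approach the paper intends: the paper itself gives no argument beyond ``as in graphs, one can easily establish the following result,'' and you have carried out that analogy carefully, with the necessary extra check that the recolored assignment is still realizable by a vertex sequence. The alternative route via Theorem~7 is also sound but unnecessary given that recycling the original ordering already does the job.
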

These bounds are tight. For example, let $D$ be acyclic digraph of order $p$. Then $\chi_d(D)=1$ and $\beta_{oc}(D)=p$.
Also, for the $3$-cycle in the Figure 3 below, $\chi_d(c_3)=2$, $\beta_{oc}(D)=2$.\\

\unitlength 1mm 
\linethickness{0.4pt}
\ifx\plotpoint\undefined\newsavebox{\plotpoint}\fi 
\begin{picture}(68.75,45)(0,0)
\put(49.75,39.5){\circle*{1.5}}
\put(33,14.5){\circle*{1.5}}
\put(65,15){\circle*{1.5}}
\put(33,14.5){\circle*{1.5}}
\put(49,15){\vector(1,0){.07}}\put(33,15){\line(1,0){32}}
\put(57.25,27.375){\vector(-2,3){.07}}\multiput(65,15)(-.0336956522,.0538043478){460}{\line(0,1){.0538043478}}
\put(41.25,27.5){\vector(-2,-3){.07}}\multiput(49.5,39.75)(-.0336734694,-.05){490}{\line(0,-1){.05}}
\put(48,33.75){$v_1$}
\put(36,17){$v_2$}
\put(58.25,17){$v_3$}
\put(49,42){$1$}
\put(28,11.75){$2$}
\put(68.75,12.75){$1$}
\put(26,3.5){$s:(v_1,1),(v_3,1),(v_2,2)$}
\put(43,-3){$Figure$ $3$}
\end{picture}
\newpage
\section{$c$-Bipartite digraphs}
\qquad A digraph $D=(V,E)$ is \emph{c-bipartite} if $V$ can be partitioned into two $c$-independent sets.
Trivially, an acyclic digraph is $c$-bipartite. We now characterize $c$-bipartite digraphs.\\

Let $C_n:v_1v_2 \ldots v_nv_1$, be a directed cycle. If each arc $v_iv_{i+1}$, $1 \leq i \leq n-1$, and $v_nv_1$is symmetric, then $C_n$ is called a \emph{symmetric cycle} of length $n$.\\
\begin{prop} The following statements are equivalent for a digraph $D$.\\
(i) $D$ is $c$-bipartite.\\
(ii) $\chi_d(D)=2$.\\
(iii) $D$ does not have an odd symmetric cycles.\end{prop}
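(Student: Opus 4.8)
The plan is to prove the chain of implications $(i) \Leftrightarrow (ii)$ first, since this is almost immediate from the definitions, and then establish the harder equivalence with $(iii)$, which I would do as $(iii) \Rightarrow (i)$ together with the contrapositive $\neg(iii) \Rightarrow \neg(ii)$ (i.e. an odd symmetric cycle forces $\chi_d \geq 3$).

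For $(i) \Leftrightarrow (ii)$: by definition $D$ is $c$-bipartite iff $V$ splits into two $c$-independent sets, which is exactly a dichromatic partition of order $2$, hence $\chi_d(D) \leq 2$; and $\chi_d(D) = 1$ is impossible unless $D$ is edgeless (in which case we may still split into two parts), so modulo the trivial edgeless case $c$-bipartite is equivalent to $\chi_d(D) = 2$. I would phrase this carefully, noting that an edgeless digraph is vacuously $c$-bipartite and trivially acyclic, consistent with the already-quoted remark that acyclic digraphs are $c$-bipartite.

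For $\neg(iii) \Rightarrow \neg(ii)$: suppose $D$ contains an odd symmetric cycle $C$ on vertices $v_1, \dots, v_{2k+1}$. Every arc of $C$ is symmetric, so by the remark in the excerpt ($\chi_d = \chi(G(D))$ iff every arc is symmetric) the restriction of any coloring to $C$ must be a proper coloring of the underlying odd cycle $G(C) = C_{2k+1}$ in the ordinary graph sense — because for a symmetric arc $uv$, whichever of $u,v$ is colored first, the other must differ. Since $\chi(C_{2k+1}) = 3$, at least $3$ colors are needed already on $C$, so $\chi_d(D) \geq 3$. For the converse direction $(iii) \Rightarrow (i)$: assume $D$ has no odd symmetric cycle. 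Consider the spanning subgraph $H$ of the underlying graph $G(D)$ consisting only of those edges arising from symmetric arcs; by hypothesis $H$ has no odd cycle, so $H$ is bipartite, giving a partition $V = A \cup B$ with no symmetric arc inside $A$ or inside $B$. I claim this is a dichromatic partition. Color all of $A$ with color $1$ and all of $B$ with color $2$, but I must exhibit an ordering that realizes this. The obstacle — and the part I expect to require the most care — is precisely producing a valid coloring sequence: within $A$, any arc $uv$ is one-way (not symmetric), so the induced digraph $D[A]$ has all its arcs non-symmetric; I would want to color $D[A]$ monochromatically, which needs $\chi_d(D[A]) = 1$, and similarly for $D[B]$. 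This is not automatic from acyclicity since $D[A]$ may contain directed cycles (a directed triangle with no symmetric arcs is $c$-monochromatic by Proposition 2, but longer configurations need checking). The right tool is to argue that any digraph with no symmetric arcs all of whose... no — that is false in general ($\chi_d(C_3)=2$ fails? no, $\chi_d(C_3)=2$; but a directed $3$-cycle is not symmetric yet has $\chi_d = 2$). So the claim "no symmetric arc inside a part" is insufficient; instead I should strengthen the bipartition: take $H$ to be the underlying graph of $D$ with edges being symmetric arcs, but then additionally note that what we actually need is each part to be $c$-independent, equivalently $\chi_d(D[A]) = \chi_d(D[B]) = 1$. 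I would therefore instead build the two color classes by a direct sequential argument: process the symmetric arcs to two-color, then handle the asymmetric arcs between and within classes by an ordering argument analogous to the proof of Proposition 2 and Theorem 1, coloring along maximal directed paths. Thus the main work is to show: if every cycle through only symmetric arcs is even, then $V$ can be $2$-partitioned so that each part, with the ordering chosen, is monochromatic — and I expect to prove this by induction on $|V|$, peeling off a vertex and re-inserting it while respecting both the bipartition from symmetric arcs and the path-ordering trick used for one-way arcs, much as in Lemma 6 and Theorem 1 above.
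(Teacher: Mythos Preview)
The paper itself gives no proof of this proposition, remarking only that it ``is more or less similar to the corresponding result for graphs, and we omit the same.'' So there is no argument to compare against; you were effectively asked to supply one.

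Your treatment of $(i)\Leftrightarrow(ii)$ and of $\neg(iii)\Rightarrow\neg(ii)$ is correct: along a symmetric arc the two endpoints must receive distinct colors regardless of the order in which they are processed, so an odd symmetric cycle forces three colors exactly as an ordinary odd cycle does in the undirected case.

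The difficulty you flag in $(iii)\Rightarrow(i)$, however, is not a technicality to be patched by a more careful induction --- it is fatal. You correctly observe that bipartitioning along the symmetric-arc subgraph may leave a directed (non-symmetric) cycle inside one part, and then that part fails to be $c$-independent. No ordering trick in the style of Lemma~6 or Theorem~7 will rescue this, because the implication itself is false. Indeed, a set $S$ is $c$-independent precisely when $D[S]$ is acyclic (one direction is Theorem~7; for the other, if $D[S]$ contains a directed cycle then Proposition~1(b) forces two colors on it in any ordering). Hence $\chi_d$ coincides with the Neumann--Lara dichromatic number, the minimum size of a partition of $V$ into acyclic induced subdigraphs. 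Now take any tournament $T$: it has no symmetric arcs at all, so certainly no odd symmetric cycle, and $(iii)$ holds. Yet tournaments can have arbitrarily large dichromatic number --- a random tournament on $n$ vertices has its largest transitive (acyclic) subtournament of order only $O(\log n)$, so for large $n$ it cannot be split into two acyclic parts; already the Paley tournament on $7$ vertices has $\chi_d=3$. Thus $(iii)$ holds while $(ii)$ fails.

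In short, your instinct that the step you could not close was genuinely problematic is right: the proposition, as stated for arbitrary digraphs, is not true, and the paper's appeal to the undirected analogue is misleading.
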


The proof is more or less similar to the corresponding result for graphs, and we omit the same.\\

If $C_n$ is a dicycle of length $n\geq 2$, then $C_n$ is $c$-bipartite, since $\chi(C_n)=2$, for all $n \geq 2$.
\section{Sequential Coloring or $s$-Coloring}
\qquad While coloring the vertices of a graph or digraph, we invariably choose a sequence of vertices to color them. This motivates one to define sequential coloring or $s$-coloring, and $s$-coloring number of a graph or digraph as follows:\\

Let $G=(V,E)$ be a graph or digraph of order $p$, and $V$ be the vertex set of $G$. Suppose $s: v_1,v_2,\ldots,v_p$ is a sequence of vertices in $V$. A $s$-\emph{coloring} of $G$ with colors $c_1,c_2,\ldots,$ is a sequence of ordered pairs\\
$(v_1,c_1), (v_2,c_2),...,(v_p,c_p)$  \hskip 2 cm ...$(1)$\\
such that for  $i<j$, $c_j \neq c_i$  if $v_iv_j$ is an edge or an arc according as $G$ is a graph or digraph.\\

The \emph{$s$-coloring number} of $G$ is the minimum number of colors that can appear in the sequence $(1)$. When $G$ is a graph, the $s$-coloring number is called the \emph{$s$-chromatic number} of $G$, and it is denoted by $\chi_s(G)$. When $G$ is a digraph, the $s$-coloring number of $G$ is called the \emph{$s$-dichromatic number} of $G$, and it is denoted by $\chi_{sd}(G)$. When $G$ is graph, the sequence (1) is \emph{chromatic} if $\chi_s(G) = \chi(G)$,  the chromatic number of $G$. Similarly, when $G$ is a digraph, the sequence (1) is \emph{dichromatic} if $\chi_{sd}(G)= \chi_d(G)$, the dichromatic number of $G$.

\unitlength 1mm 
\linethickness{0.4pt}
\ifx\plotpoint\undefined\newsavebox{\plotpoint}\fi 
\begin{picture}(133.25,70.75)(0,0)
\put(18.75,46.5){\circle*{1.5}}
\put(43.5,46.75){\circle*{1.5}}
\put(58.25,59.25){\circle*{1.5}}
\put(57.5,33){\circle*{1.5}}
\put(1.75,31.75){\circle*{1.5}}
\put(2,60){\circle*{1.5}}
\multiput(1.5,32.25)(.0393835616,.0336757991){438}{\line(1,0){.0393835616}}
\put(18.75,47){\line(1,0){24.75}}
\multiput(43.5,47)(.0397574124,.0336927224){371}{\line(1,0){.0397574124}}
\multiput(1.75,60)(.042746114,-.0336787565){386}{\line(1,0){.042746114}}
\put(43.5,46.5){\line(1,-1){13.5}}
\put(6.25,61.25){$v_1$}
\put(19.75,51){$v_2$}
\put(54.25,63.25){$v_5$}
\put(5.25,31.75){$v_3$}
\put(42,50.75){$v_4$}
\put(53.5,30.5){$v_6$}
\put(1.75,56){$1$}
\put(1.25,36){$1$}
\put(19,42.75){$2$}
\put(41.75,42.75){$3$}
\put(59.75,56){$1$}
\put(56.75,36){$1$}
\put(1.75,47.25){$T:$}
\put(106.75,64.75){\circle*{1.5}}
\put(87.5,50){\circle*{1.5}}
\put(128,51.5){\circle*{1.5}}
\put(96.5,31.25){\circle*{1.5}}
\put(119,31.25){\circle*{1.5}}
\multiput(88,50.75)(.0430232558,.0337209302){430}{\line(1,0){.0430232558}}
\multiput(106.5,65.25)(.0542394015,-.0336658354){401}{\line(1,0){.0542394015}}
\multiput(128.25,51.75)(-.0337078652,-.0758426966){267}{\line(0,-1){.0758426966}}
\put(119.25,31.5){\line(-1,0){23.25}}
\multiput(96,31.5)(-.0337301587,.0724206349){252}{\line(0,1){.0724206349}}
\multiput(106.5,64.25)(-.0337171053,-.109375){304}{\line(0,-1){.109375}}
\multiput(107.25,64.5)(.0336391437,-.1001529052){327}{\line(0,-1){.1001529052}}
\put(106.5,68){$v_1$}
\put(82.5,51.5){$v_2$}
\put(93,25.75){$v_3$}
\put(119.25,26){$v_4$}
\put(130,50.5){$v_5$}
\put(100.75,65.5){$1$}
\put(85,46.75){$2$}
\put(91.25,31){$3$}
\put(123,31){$4$}
\put(128.75,46.75){$2$}
\put(74,43.75){$G:$}
\put(-2,17){$s_1:(v_1,1),(v_3,1),(v_5,1),(v_6,1),(v_2,2),(v_4,3)$}
\put(29.25,10){$(a)$}
\put(84,17){$s_2: (v_1,1),(v_2,2),(v_5,2),(v_3,3),(v_4,4)$}
\put(105.5,10){$(b)$}
\put(64,4.25){$Figure$ $4$}
\end{picture}

For example, in Figure 4(a), $\chi_{s_1}(T)= 3$, and in Figure 4(b) $\chi_{s_2}(G)=4$.
\\\\

\unitlength 1mm 
\linethickness{0.4pt}
\ifx\plotpoint\undefined\newsavebox{\plotpoint}\fi 
\begin{picture}(126.5,38.25)(0,0)
\put(8.25,32.75){\circle*{1.5}}
\put(19.5,19.5){\circle*{1.5}}
\put(7.75,8.25){\circle*{1.5}}
\put(46.25,19.75){\circle*{1.5}}
\put(57,34.5){\circle*{1.5}}
\put(57,6){\circle*{1.5}}
\multiput(8.5,32.75)(.0336391437,-.0405198777){327}{\line(0,-1){.0405198777}}
\multiput(19.5,19.5)(-.034457478,-.0337243402){341}{\line(-1,0){.034457478}}
\put(19.25,20.25){\line(1,0){26.75}}
\multiput(46,20.25)(.0336826347,.0441616766){334}{\line(0,1){.0441616766}}
\multiput(46.25,20.25)(.0336990596,-.0446708464){319}{\line(0,-1){.0446708464}}
\put(6,28.5){$v_1$}
\put(14,20){$v_2$}
\put(5,11){$v_3$}
\put(49,20){$v_4$}
\put(58,30.25){$v_5$}
\put(57.75,8.75){$v_6$}
\put(10,35.75){$1$}
\put(10.25,5.5){$1$}
\put(21,23.25){$2$}
\put(44.5,23.5){$1$}
\put(55,38.25){$2$}
\put(53.75,3.5){$2$}
\put(31.75,2.75){$(a)$}
\put(-12,-5){$s_1: (v_1,1),(v_2,2),(v_3,1)(v_4,1),(v_5,2),(v_6, 2)$}
\put(30,-12){$\chi_{s_1}(T)= 2$}
\put(79.75,33.5){\circle*{1.5}}
\put(90.75,18.25){\circle*{1.5}}
\put(79,4.5){\circle*{1.5}}
\put(117,18.25){\circle*{1.5}}
\put(124.75,34.25){\circle*{1.5}}
\put(124.25,4){\circle*{1.5}}
\multiput(79.5,34.25)(.0336826347,-.0479041916){334}{\line(0,-1){.0479041916}}
\multiput(90.75,18.25)(-.0336676218,-.0401146132){349}{\line(0,-1){.0401146132}}
\put(90.75,18){\line(1,0){25.75}}
\multiput(116.5,18)(.0336734694,.0653061224){245}{\line(0,1){.0653061224}}
\multiput(116.75,18.5)(.033632287,-.063901345){223}{\line(0,-1){.063901345}}
\put(78,28){$v_1$}
\put(84,18.25){$v_2$}
\put(76,7){$v_3$}
\put(120,18){$v_4$}
\put(126,31){$v_5$}
\put(126,6){$v_6$}
\put(81.25,37.5){$1$}
\put(81.75,2){$1$}
\put(92.75,21){$2$}
\put(114,21.75){$3$}
\put(121.25,37.75){$1$}
\put(120.25,2){$1$}
\put(101.75,3){$(b)$}
\put(75,-5){$s_2: (v_1,1),(v_3,1),(v_5,1)(v_6,1),(v_2,2),(v_4,3)$}
\put(99,-12){$\chi_{s_2}(T)= 3$}
\put(67.5,-18){$Figure$ $5$}
\end{picture}
\\\\\\\\

In Figure 5, the sequence $s_1$ is a chromatic, but $s_2$ is not.\\\\

\unitlength .8mm 
\linethickness{0.4pt}
\ifx\plotpoint\undefined\newsavebox{\plotpoint}\fi 
\begin{picture}(119.25,65.375)(0,0)
\put(6.25,20.75){\circle*{1}}
\put(25.5,20.75){\circle*{1}}
\put(15.625,20.75){\vector(1,0){.07}}\put(6,20.75){\line(1,0){19.25}}
\put(5.5,15.25){$v_1$}
\put(26.25,15.5){$v_2$}
\put(6,25){$1$}
\put(24.75,25.25){$2$}
\put(15.25,7.25){$(a)$}
\put(15.25,0){$D_1$}
\put(44.5,20.5){\circle*{1.5}}
\put(65.25,20){\circle*{1.5}}
\put(55,37.25){\circle*{1.5}}
\put(54.75,20.25){\vector(1,0){.07}}\put(44.75,20.25){\line(1,0){20}}
\put(59.625,29){\vector(-2,3){.07}}\multiput(64.75,20.25)(-.0337171053,.0575657895){304}{\line(0,1){.0575657895}}
\put(49.5,29.125){\vector(2,3){.07}}\multiput(44.25,20.75)(.0336538462,.0536858974){312}{\line(0,1){.0536858974}}
\put(42.5,15.75){$v_1$}
\put(66,15.5){$v_2$}
\put(55,42.75){$v_3$}
\put(53.75,31){$3$}
\put(48,22.5){$1$}
\put(59,21.5){$2$}
\put(54.25,7.75){$(b)$}
\put(54.25,0){$D_2$}
\put(83.5,20.75){\circle*{1.5}}
\put(106.25,21.25){\circle*{1.5}}
\put(94.25,39.5){\circle*{1.5}}
\put(95,21){\vector(1,0){.07}}\put(83.75,21){\line(1,0){22.5}}
\put(100.125,30.625){\vector(-2,3){.07}}\multiput(106.25,21)(-.0336538462,.0528846154){364}{\line(0,1){.0528846154}}
\put(88.75,30.5){\vector(2,3){.07}}\multiput(83.25,21.25)(.0336391437,.0565749235){327}{\line(0,1){.0565749235}}
\put(115,43.5){\circle*{1}}
\put(88,48.75){\vector(4,3){.07}}\qbezier(83.25,21)(76.875,65.375)(115,43.25)
\put(115.125,43.625){\vector(1,3){.07}}\multiput(115,43.25)(.03125,.09375){8}{\line(0,1){.09375}}
\put(104.75,41.5){\vector(4,1){.07}}\multiput(94.5,39.75)(.197115385,.033653846){104}{\line(1,0){.197115385}}
\put(110.625,32.625){\vector(1,2){.07}}\multiput(106,21.75)(.0336363636,.0790909091){275}{\line(0,1){.0790909091}}
\put(83.75,16.25){$v_1$}
\put(107.25,16.5){$v_2$}
\put(93.75,43.25){$v_3$}
\put(119.25,44){$v_4$}
\put(88,23.5){$1$}
\put(99.5,23){$2$}
\put(93.75,33.5){$3$}
\put(118,39.75){$4$}
\put(94.5,7.75){$(c)$}
\put(94.5,0){$D_3$}
\put(50,-10){Figure $6$}
\end{picture}
\\\\
The following result is well known.\\
\begin{prop}Given a positive integer $n$, there exists a triangle free graph $G$ whose chromatic number is $n$.\end{prop}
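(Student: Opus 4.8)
The plan is to prove the statement by induction on $n$, the engine of the induction being \emph{Mycielski's construction}. For small $n$ the claim is immediate: $K_1$ has chromatic number $1$, $K_2$ has chromatic number $2$, and the $5$-cycle $C_5$ has chromatic number $3$, and each is triangle free. So it suffices to show that from a triangle-free graph $G$ with $\chi(G)=n\ge 2$ one can build a triangle-free graph with chromatic number $n+1$.

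Given $G$ with $V(G)=\{v_1,\dots,v_k\}$, I would form the Mycielskian $\mu(G)$: take a disjoint copy $\{u_1,\dots,u_k\}$ of $V(G)$ and one further vertex $w$; retain all edges of $G$, join each $u_i$ to every neighbor of $v_i$ in $G$, and join $w$ to all of $u_1,\dots,u_k$. Two things must be checked. First, $\mu(G)$ is triangle free: a triangle among the $v$'s would be a triangle of $G$; a triangle using exactly one $u_i$ and two vertices $v_j,v_\ell$ forces $v_iv_j,v_iv_\ell,v_jv_\ell\in E(G)$, again a triangle of $G$; and $w$ lies in no triangle because its neighborhood $\{u_1,\dots,u_k\}$ is independent. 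The fact that the $u$'s are pairwise non-adjacent is what makes this work. Second, $\chi(\mu(G))\le n+1$: color $G$ optimally with $\{1,\dots,n\}$, give $u_i$ the color of $v_i$, and give $w$ the new color $n+1$; the only edges needing a check are the $u_iv_j$, which correspond to edges $v_iv_j$ of $G$.

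The heart of the matter --- and the step I expect to be the main obstacle --- is the lower bound $\chi(\mu(G))\ge n+1$. Here I would argue by contradiction: assume a proper $n$-coloring of $\mu(G)$ exists and, relabeling colors, that $w$ gets color $n$, so every $u_i$ avoids color $n$. Now recolor $G$ by keeping each $v_i$'s color unless it equals $n$, in which case replace it by the color of $u_i$ (which lies in $\{1,\dots,n-1\}$). One verifies that this is a proper $(n-1)$-coloring of $G$: on an edge $v_iv_j$, both endpoints cannot originally have had color $n$; if neither was recolored there is nothing to check; and if, say, $v_i$ was recolored, then $u_iv_j$ is an edge of $\mu(G)$, so $v_i$'s new color (that of $u_i$) differs from that of $v_j$. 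This contradicts $\chi(G)=n$. Combining the two inequalities gives $\chi(\mu(G))=n+1$, and the induction goes through, producing for each $n$ a triangle-free graph with chromatic number $n$.
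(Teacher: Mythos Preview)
Your argument is correct and is precisely the classical Mycielski proof; the triangle-freeness check, the upper bound via copying colors, and the recoloring contradiction for the lower bound are all carried out properly. There is nothing to compare here, however: the paper does not prove this proposition at all---it merely records it as ``well known'' and moves on to the digraph analogue---so your write-up in fact supplies strictly more than the paper does.
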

 A similar result for digraphs is the following:\\
\begin{prop} Given a positive integer $n$, there exists a digraph $D$ and a sequence $s$ of its vertices in $D$ such that its $s$-dichromatic number $\chi_{sd}(D)= n$.\end{prop}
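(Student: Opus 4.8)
The plan is to exhibit, for each $n$, a single digraph together with a vertex sequence that \emph{forces} exactly $n$ colors, and the natural candidate is the transitive tournament. Concretely, I would let $D_n$ have vertex set $\{v_1,\dots,v_n\}$ and arc set $\{\,v_iv_j : 1\le i<j\le n\,\}$, and take the sequence $s: v_1,v_2,\dots,v_n$. (For $n=2,3,4$ these are exactly the digraphs $D_1,D_2,D_3$ drawn in Figure 6.) Then I would check $\chi_{sd}(D_n)=n$ by establishing the two inequalities separately.

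For the lower bound I argue straight from the definition of an $s$-coloring. Let $(v_1,c_1),\dots,(v_n,c_n)$ be any $s$-coloring of $D_n$. For every pair $i<j$, the arc $v_iv_j$ is present in $D_n$, so the defining condition forces $c_j\neq c_i$; hence the colors $c_1,\dots,c_n$ are pairwise distinct and the sequence uses at least $n$ colors, giving $\chi_{sd}(D_n)\ge n$. For the upper bound I simply display the coloring $c_i=i$ for $1\le i\le n$: it violates none of the required inequalities, so it is a legitimate $s$-coloring with exactly $n$ colors, whence $\chi_{sd}(D_n)\le n$. Combining the two bounds gives $\chi_{sd}(D_n)=n$.

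A more structural way to see the same thing, worth noting in passing: for any order $s: v_1,\dots,v_p$ of a digraph $D$, the $s$-dichromatic number equals $\chi(H_s)$, where $H_s$ is the undirected graph on $V$ whose edges are the pairs $\{v_i,v_j\}$ with $i<j$ and $v_iv_j$ an arc of $D$. Indeed a valid $s$-coloring is precisely a proper coloring of $H_s$, since only a ``forward'' arc (tail preceding head in $s$) can constrain the colors. From this viewpoint the proposition asks for a digraph and an order with $\chi(H_s)=n$, and $H_s=K_n$ is realised exactly by the transitive tournament above; alternatively one could start from a triangle-free graph $G$ with $\chi(G)=n$ furnished by the preceding proposition, orient every edge from its $s$-earlier to its $s$-later endpoint for an arbitrary order $s$, and reach the same conclusion with $D$ additionally triangle-free.

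There is no genuine obstacle here; the only point that needs care is identifying correctly which arcs impose constraints for a given sequence, namely that an arc $uv$ matters only when the tail $u$ precedes the head $v$ in $s$, after which the transitive tournament forces all $n$ colors trivially. It is worth recording the contrast with the dichromatic number: each $D_n$ is acyclic, so $\chi_d(D_n)=1$ by the theorem that every acyclic digraph has dichromatic number $1$; thus $\chi_{sd}$ can exceed $\chi_d$ by an arbitrarily large amount, the gap being entirely an artifact of the chosen vertex order.
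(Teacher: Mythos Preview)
Your construction is exactly the paper's: the digraphs $D_1,D_2,D_3$ in Figure~6 are the transitive tournaments on $2,3,4$ vertices built by successively adding a new sink, and the paper's inductive description of $D_n$ yields precisely your $D_n$ with the same sequence $s:v_1,\dots,v_n$. Your argument is in fact more complete than the paper's, which simply asserts ``Clearly, $\chi_{sd}(D)=n$'' without spelling out the lower bound that you verify explicitly.
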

\begin{proof} Let $D_1$, be the digraph which consists of an arc $v_1v_2$.
We color $v_1$ with color 1, and then color $v_2$ with color 2 as in Figure 6(a). Take a new vertex $v_3$, and draw arcs $v_1v_3$ and $v_2v_3$ to get the digraph $D_2$. Now color the vertex $v_3$ with color 3 as shown in Figure 6(b). After having successively constructed the digraphs $D_1,D_2,D_3,\ldots,D_{n-1}$, and giving colors $1,2,3,\ldots,n-1$ to the vertices $v_1,v_2,\ldots,v_{n-1}$, we construct the digraph $D_n$ by taking a new vertex $v_n$, and adding arcs $v_iv_n$, for $1 \leq i \leq n-1$, and give color $n$ to $v_n$. The digraph $D_n$ thus constructed has $s$-coloring $s: (v_1,1),(v_2,2),\ldots,(v_n,n)$. Clearly, $\chi_{sd}(D) = n$.\end{proof}
\section{Complete Partition}
\qquad A partition $P=\{V_1,V_2,\ldots,V_k\}$ of the vertex set $V$ of a graph $G=(V,E)$ is \emph{complete} if there exists an edge between any two sets in $P$. A minimum order of a partition of $V$ into independent sets is called a \emph{chromatic partition}. It is well known that any chromatic partition is complete. A $s$-coloring $s:(v_1,c_1),(v_2,c_2),\ldots,(v_p,c_p)$ of a graph/digraph also partitions its vertex set $V=\{v_1,v_2,\ldots,v_p\}$ into independent/$c$-independent sets. In particular, a $s$-coloring of a graph $G=(V,E)$ with minimum number of colors gives a partition of the vertex set which is complete.\\

The \emph{achromatic number} $\psi(G)$ of a graph $G=(V,E)$ is the maximum order of a partition of $V$ into independent sets, which is complete. It now follows that for any graph $G$,
$$\chi(G)\leq\chi_s(G)\leq \psi(G) \hskip.5cm ...(2)$$

A \emph{complete coloring} of a graph $G$ is a partition of its vertex set $V$ into independent sets which is complete. It is known that for every integer \emph{a} with $\chi(G)\leq a \leq \psi(G)$, there exists a complete partition of $V$ of order $a$. Hence it follows that for any such integer $a$ there exists a $s$-coloring of $G$ such that $\chi_s(G)= a$. \\
 It is known that many upper bounds of $\chi(G)$ are also bounds for $\psi(G)$, and hence bounds for $\chi_s(G)$. In a $s$-coloring, if $\beta_{0s}$ is the maximum cardinality of a color class, one can easily make out that $p/\beta_{0_s}$ is a lower bound for $\chi_s(G)$.\\\\
\textbf{Problem}: Characterize graphs $G$ for which \\
$$\chi(G) = \chi_s(G) = \psi(G).$$
For example, complete graphs, wheels and cycles belong to this category.\\

A partition $P=\{V_1,V_2,\ldots,V_k\}$ of the vertex set $V$ of a digraph is \emph{complete} if there exists an arc between any two sets in $P$. A minimum order of a partition of $V$ into $c$-independent sets is a \emph{dichromatic partition} of $V$, and any such partition is complete.\\

The $s$-\emph{achrodichromatic number} $\psi_{sd}(D)$ of a digraph $D$ is the maximum order of a partition of $V$ into $c$-independent sets which is complete.\\

Let $V$ be the set of vertices of a graph/digraph, and $s(V)$ be the set of all sequences of vertices in $V$. \\

\textbf{Note:}\\
(i) For a graph $G=(V,E)$,\\
$$\chi(G)= min\{\chi_s(G): s \in s(V)\}$$

$$\psi(G) = max \{\chi_s(G):s \in s(V)\}.$$
(ii) For a digraph $D=(V,E)$,
$$\chi_d(D) = min\{\chi_{sd}(D) : s \in S(V)\}$$
$$\psi_{sd}(D)= max\{\chi_{sd}(D): s \in s(V)\}.$$

 For any digraph $D$, we have, $$\chi_d(D) \leq \chi_{sd}(D) \leq \psi_{sd}(D).$$
In Figure 7, $\psi_{sd}(D)=4$ where $s$-coloring is $s:(v_1,1) (v_2,2) (v_5,2) (v_3,3) (v_4,4).$ The above sequence also gives the $s$-achromatic number of the underlying graph $G$ of $D$.\\
\\\\
\unitlength .8mm 
\linethickness{0.4pt}
\ifx\plotpoint\undefined\newsavebox{\plotpoint}\fi 
\begin{picture}(98,85.5)(0,0)
\put(63,71.75){\circle*{1.5}}
\put(38,47.5){\circle*{1.5}}
\put(87.5,47.5){\circle*{1.5}}
\put(46.5,18){\circle*{1.5}}
\put(73.25,16.75){\circle*{1.5}}
\put(50.125,59.75){\vector(-1,-1){.07}}\multiput(62.25,72)(-.03372739917,-.03407510431){719}{\line(0,-1){.03407510431}}
\put(42.125,33){\vector(1,-4){.07}}\multiput(38,47.5)(.0336734694,-.1183673469){245}{\line(0,-1){.1183673469}}
\put(59.625,17.75){\vector(1,0){.07}}\multiput(46.25,18.5)(.59444444,-.03333333){45}{\line(1,0){.59444444}}
\put(75.375,59.75){\vector(1,-1){.07}}\multiput(63.25,71.75)(.03405898876,-.03370786517){712}{\line(1,0){.03405898876}}
\put(80.375,32.25){\vector(-1,-2){.07}}\multiput(87.5,47.75)(-.0336879433,-.073286052){423}{\line(0,-1){.073286052}}
\put(54.75,44.875){\vector(-1,-3){.07}}\multiput(63,71)(-.0336734694,-.1066326531){490}{\line(0,-1){.1066326531}}
\put(68,44.125){\vector(1,-4){.07}}\multiput(63,71.5)(.0336700337,-.1843434343){297}{\line(0,-1){.1843434343}}
\put(63.5,75){$v_1$}
\put(63.5,80){$1$}
\put(32,47){$v_2$}
\put(26,47){$2$}
\put(43.5,12){$v_3$}
\put(39.75,7){$3$}
\put(74.75,12){$v_4$}
\put(76.25,7){$4$}
\put(89,47){$v_5$}
\put(96,46){$2$}
\put(56,2){$Figure$ $7$}
\end{picture}
\\

\begin{prop} Let $G(D)$ be the underlying graph of digraph $D$. Then $$\psi_{sd}(D) \leq \psi_s(G(D)).$$ \end{prop}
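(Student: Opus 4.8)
The plan is to compare the two sides through their partition descriptions: $\psi_{sd}(D)$ is the maximum order of a complete partition of $V$ into $c$-independent sets of $D$, while $\psi_s(G(D))$ is the maximum order of a complete partition of $V$ into independent sets of the underlying graph, i.e.\ the achromatic number $\psi(G(D))$. So I would fix a complete partition $\mathcal P=\{V_1,\dots,V_k\}$ of $D$ into $c$-independent sets with $k=\psi_{sd}(D)$, realised by a colouring $C$ of $D$, and try to build from it a complete partition of $G(D)$ into independent sets of order at least $k$.

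The first step is a structural remark: each $V_i$ induces an \emph{acyclic} subdigraph of $D$. Indeed all vertices of $V_i$ get the same colour under $C$, so no arc inside $V_i$ can have its tail coloured before its head; hence the order in which $C$ colours $V_i$ lists the head before the tail along every internal arc, which is impossible around a directed cycle. (Equivalently $D[V_i]$ is acyclic, so $\chi_d(D[V_i])=1$ by Theorem 8.) Next I would refine $\mathcal P$: inside each $V_i$ take a chromatic — hence, by the fact recalled above that any chromatic partition is complete, a \emph{complete} — partition of the graph $G(D)[V_i]$ into independent sets $V_i^{1},\dots,V_i^{r_i}$, and let $\mathcal Q$ be the union of these families. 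Then $\mathcal Q$ partitions $V$ into sets independent in $G(D)$, and since every $r_i\ge 1$ its order $\sum_i r_i$ is at least $k$. (A variant route uses $\psi_{sd}(D)=\max_s\chi_{sd}(D)$ and $\psi_s(G(D))=\max_s\chi_s(G(D))$ and looks for a single sequence $s'$ with $\chi_{s'}(G(D))\ge\chi_{sd}(D)$; but $\chi_{sd}(D)\le\chi_s(G(D))$ need \emph{not} hold for the same $s$, so $s'$ has to be constructed afresh from the $c$-independent classes.)

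What remains — and what I expect to be the genuine obstacle — is to see that $\mathcal Q$, or a coarsening of it of order still at least $k$, is \emph{complete} in $G(D)$. Completeness between pieces $V_i^{a},V_i^{b}$ of the same class is automatic; the trouble is across classes, since completeness of $\mathcal P$ supplies only an arc somewhere between $V_i$ and $V_j$ for $i\ne j$, not an edge between two prescribed pieces. The plan would be to choose the refinements coherently — processing $V_1,\dots,V_k$ in order and, when splitting $V_j$, grouping its vertices by which already-formed pieces of earlier classes they meet — and then, if necessary, to merge pieces of $\mathcal Q$ while keeping them independent, invoking that for a graph every integer between $\chi(G(D))$ and $\psi(G(D))$ is the order of some complete partition. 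The delicate point is to perform this repair without letting the order drop below $k$: controlling how the refinements of different classes interact, so that enough cross-edges survive, is the crux of the proof.
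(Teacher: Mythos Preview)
Your route and the paper's route point in opposite directions. The paper's argument starts from an $s$-achromatic partition $P=\{V_1,\dots,V_k\}$ of $G(D)$ and observes two trivialities: every independent set of $G(D)$ is $c$-independent in $D$ (no arcs inside, so any ordering gives all vertices the same colour), and completeness of $P$ in $G(D)$ immediately gives completeness in $D$ (an edge of $G(D)$ is an arc of $D$ in some direction). Hence $P$ is itself a complete partition of $V(D)$ into $c$-independent sets, which yields $\psi_{sd}(D)\ge\psi_s(G(D))$. Note the direction: the body of the paper's proof establishes the \emph{reverse} of the displayed inequality, and the chain $\ldots\le\psi_s(G)\le\psi_{sd}(D)$ stated a few lines after the Proposition confirms that the ``$\le$'' in the statement is a misprint for ``$\ge$''.

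You, by contrast, take the displayed $\le$ at face value and try to pass from a complete $c$-independent partition of $D$ to a complete independent partition of $G(D)$ of no smaller order. You correctly isolate the obstruction: refining each $V_i$ into independent pieces can destroy completeness between pieces coming from different $V_i$, and your proposed ``coherent refinement then merge'' repair is left as a hope rather than an argument (you yourself call it ``the crux'' without supplying it). That gap is real; the inequality in that direction does not follow from the ideas you sketch, and in light of the paper's own proof and subsequent chain it is simply the wrong inequality to be proving. The useful observation you do make---that each $c$-independent class induces an acyclic subdigraph---is correct, but it feeds the paper's direction (it explains why $c$-independence is genuinely weaker than independence), not yours.
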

\begin{proof} Let $P=\{V_1,V_2,\ldots,V_k\}$ be any $s$-achromatic partition of the vertex set $V$ of $G(D)$. Since each independent set is $c$-independent, $P$ is also a partition of $V$ into $c$-independent sets in $D$. Further, since $P$ is complete partition of $V$ in $G$, it is also a complete partition of $V$ in $D$. Hence, $\psi_{sd}(D) \leq \psi_s(G(D)).$\end{proof}

Let $D$ be a digraph and $G$ be its underlying graph of $D$. We have the following chain of relations concerning the coloring numbers discussed above. $$\chi_d(D) \leq \chi_s(D) \leq \chi(G) \leq \chi_s(G) \leq \psi_s(G) \leq \psi_{sd}(D).$$

\section{Coloring of a Digraph and its Matrix Representation}
\qquad A vertex labeled digraph $D$ can be uniquely represented by a matrix called the $L$-matrix of $D$. A characterization of such matrices is obtained. Further, a characterization of acyclic digraphs in terms of its $L$-matrix is obtained.\\\\

Let $D=(V,E)$ be a digraph and $V=\{v_1,v_2,\ldots,v_p\}$. Suppose the vertices of $D$ are labeled, and for a vertex $v_i$, let $\ell(v_i)$ be the label of $v_i$, $1 \leq i \leq p$. The labels of the vertices need not be distinct. One can represent such a labeled digraph $D$ uniquely by means of a square matrix $M=[a_{ij}]$ of order $p$ called the $L$-\emph{matrix} of $D$  whose entries $a_{ij}$ are defined as follows:\\

\begin{equation*}
a_{ij}=
\begin{cases}
 2, \text{if $v_iv_j$ is an arc and $\ell(v_i)=\ell(v_j)$}, \\
        1,  \text{if $v_iv_j$ is an arc and $\ell(v_i)\neq \ell(v_j)$},\\
       -1,  \text{if $v_iv_j$ is not an arc and $\ell(v_i)=\ell(v_j)$},\\
        0,  \text{otherwise}.
\end{cases}\end{equation*}
Note that the $L$- matrix is a non-symmetric with zero diagonal.\\\\
\unitlength .8mm 
\linethickness{0.08pt}
\ifx\plotpoint\undefined\newsavebox{\plotpoint}\fi 
\begin{picture}(109.5,78)(0,0)
\put(19.125,44){\vector(1,0){.07}}\put(2.5,44){\line(1,0){33.25}}
\put(19.75,70.25){\circle*{1.5}}
\put(2.5,44.25){\circle*{1.5}}
\put(35.25,44.25){\circle*{1.5}}
\put(60,77.25){\line(0,-1){40.75}}
\put(50.75,73.5){\line(1,0){58.75}}
\put(65.25,76.5){$v_1$}
\put(83.75,76.5){$v_2$}
\put(103.75,76.5){$v_3$}
\put(53.5,67){$v_1$}
\put(53.5,54.5){$v_2$}
\put(53.5,41.75){$v_3$}
\put(65.25,67){$0$}
\put(83.75,67){$1$}
\put(102,67){$-1$}
\put(65.25,54.5){$0$}
\put(83.75,54.5){$0$}
\put(103.75,54.5){$1$}
\put(65.25,41.75){$2$}
\put(83.75,41.75){$0$}
\put(103.75,41.75){$0$}
\put(19.5,75.5){$1$}
\put(1.25,38.25){$2$}
\put(35.5,38.25){$1$}
\put(19,63.25){$v_1$}
\put(7,47){$v_2$}
\put(-13,58){$D$:}
\put(47,27){$Figure$ $8$}
\put(11.125,57.25){\vector(-2,-3){.07}}\multiput(19.75,70)(-.0336914063,-.0498046875){512}{\line(0,-1){.0498046875}}
\put(29,47.5){$v_3$}
\put(27.875,57.75){\vector(-2,3){.07}}\multiput(35.75,45)(-.0337259101,.0546038544){467}{\line(0,1){.0546038544}}
\end{picture}\\
In Figure 8, the $L$-matrix of the labeled digraph $D$ is given.
Suppose there are $k$ distinct labels in the labeled digraph $D$. One can partition the vertex set $V$ into $k$ sets $V_1,V_2,\ldots,V_k$, such that all vertices in $V_i$ for $1\leq i\leq k$, have the same label.
Given the $L$-matrix $M= [a_{ij}]$ of a labeled digraph $D$, one can easily determine the digraph $D$ uniquely as well as the partition $V=\{V_1,V_2,\ldots,V_k\}$ of the vertex set given by the labels as follows:\\
Let $V=\{v_1,v_2,\ldots,v_p\}$, be the vertex set of $D$. For any two vertices $v_i$ and $v_j$ in $D$,
 $v_iv_j$ is an arc in $D$ if, and only if, $a_{ij}\in \{2,1\}$. Further, $v_i$ and $v_j$ have the same label if, and only if, $a_{ij}\in\{2,-1\}$. For a vertex $v_i$, the set of all vertices in $D$ having the same label as $v_i$ is determined as follows:\\
 $C(v_i)= \{v_i\} \cup\{v_j: a_{ij} \in \{2-1\}\}$.\\

In \cite{es} the authors have defined the $L$-matrix $M=[a_{ij}]$ of a vertex labeled graph $G=(V,E)$ of order $p$ analogously as follows:
Let $V=\{v_1,v_2,\ldots,v_p\}$. For any two vertices $v_i$ and $v_j$ in $G$, $v_iv_j$ is an edge if, and only if, $a_{ij}\in \{2,1\}$. Further, $\ell(v_i)=\ell(v_j)$ if, and only if, $a_{ij}=\{2,-1\}$. Note that this matrix is symmetric.\\
We now characterize $L$-matrix of a labeled digraph.
\begin{theorem}(Characterization) A square matrix $M=[a_{ij}]$ of order $p$ with zero diagonal and $a_{ij} \in \{2,1,0,-1\}$, is the $L$-matrix of a labeled digraph $D$ of order $p$, if, and only if, the following conditions hold.\\
i) $a_{ij}$,$a_{jk} \in \{2,-1\}$, implies $a_{ik}\in \{2,-1\}$.\\
ii) $a_{ij} \in \{2,-1\}$ and $a_{jk}\in \{0,1\}$, implies $a_{ik}\in \{0,1\}$.\end{theorem}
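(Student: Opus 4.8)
The plan is to prove the two implications separately: necessity is a direct verification, and sufficiency is an explicit reconstruction of the digraph from the matrix.

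For necessity, suppose $M$ is the $L$-matrix of a labeled digraph $D$ with $V=\{v_1,\dots,v_p\}$. The point to isolate first is the dictionary implicit in the definition of the $L$-matrix: $a_{ij}\in\{2,-1\}$ holds exactly when $\ell(v_i)=\ell(v_j)$, and $a_{ij}\in\{2,1\}$ holds exactly when $v_iv_j$ is an arc. Granting this, condition (i) is merely the transitivity of the relation ``$\ell(v_i)=\ell(v_j)$'': from $\ell(v_i)=\ell(v_j)$ and $\ell(v_j)=\ell(v_k)$ we get $\ell(v_i)=\ell(v_k)$, hence $a_{ik}\in\{2,-1\}$; and condition (ii) records that $\ell(v_i)=\ell(v_j)$ together with $\ell(v_j)\ne\ell(v_k)$ forces $\ell(v_i)\ne\ell(v_k)$, hence $a_{ik}\in\{0,1\}$. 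Both are immediate from the dictionary.

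For sufficiency I would reverse-engineer $D$ from $M$. Define a relation on $\{1,\dots,p\}$ by $i\sim j$ iff $i=j$ or $a_{ij}\in\{2,-1\}$. The first task is to show $\sim$ is an equivalence relation: reflexivity holds by fiat, transitivity on distinct indices is exactly condition (i), and condition (ii) is what guarantees that class membership does not depend on the representative chosen (if $i\sim j$ then for every $k$ one has $a_{ik}\in\{2,-1\}$ iff $a_{jk}\in\{2,-1\}$, the two directions being (i) and (ii) respectively), from which the symmetry of $\sim$ is extracted. Once this is in hand, assign to $v_1,\dots,v_p$ labels so that $\ell(v_i)=\ell(v_j)$ iff $i\sim j$ (one fresh label per class), and declare $v_iv_j$ to be an arc precisely when $a_{ij}\in\{2,1\}$; call the resulting labeled digraph $D$. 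It then remains to check that the $L$-matrix of $D$ is $M$, by a four-way case analysis on $a_{ij}$ for $i\ne j$: if $a_{ij}=2$ then $v_iv_j$ is an arc and $i\sim j$, so the $(i,j)$ entry is $2$; if $a_{ij}=1$ then $v_iv_j$ is an arc but $i\not\sim j$ (since $\sim$ is an equivalence relation and $a_{ij}\notin\{2,-1\}$), giving $1$; if $a_{ij}=-1$ then $v_iv_j$ is not an arc and $i\sim j$, giving $-1$; and $a_{ij}=0$ gives $0$. The diagonal entries agree by the convention that the $L$-matrix has zero diagonal.

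The main obstacle is the first step of the sufficiency argument: manufacturing a genuine equivalence relation out of (i) and (ii). Transitivity is handed to us by (i), but reflexivity and symmetry of ``$a_{ij}\in\{2,-1\}$'' are not literally among the hypotheses (the diagonal is $0$, and the matrix is permitted to be non-symmetric), so one must read conditions (i)--(ii) as statements about distinct index triples and use (ii) to exclude the inconsistency ``$a_{ij},a_{ik}\in\{2,-1\}$ but $a_{jk}\in\{0,1\}$'' which would otherwise let the generated equivalence classes glue together indices where $M$ records $0$ or $1$. Everything after that equivalence-relation step is bookkeeping.
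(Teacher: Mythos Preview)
Your necessity argument and your overall plan for sufficiency are exactly what the paper does; the paper's sufficiency proof is in fact terser than yours, simply declaring ``$v_iv_j$ is an arc iff $a_{ij}\in\{2,1\}$, and $\ell(v_i)=\ell(v_j)$ iff $a_{ij}\in\{2,-1\}$'' and asserting that $M$ is then the $L$-matrix of the resulting digraph, without pausing over the equivalence-relation issue you raise.

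You are right to flag that issue as the main obstacle, but your proposed resolution does not go through. From (i) and (ii) you correctly deduce that if $a_{ij}\in\{2,-1\}$ then for every third index $k$ one has $a_{ik}\in\{2,-1\}$ iff $a_{jk}\in\{2,-1\}$; however, this does \emph{not} yield symmetry of $\sim$, because you cannot take $k=i$ or $k=j$ (the diagonal is $0$, and with repeated indices the conditions give nothing useful). Concretely, for $p=2$ the matrix
\[
M=\begin{pmatrix}0&2\\0&0\end{pmatrix}
\]
satisfies (i) and (ii) vacuously (there is no triple of distinct indices), yet it is not the $L$-matrix of any labeled digraph: $a_{12}=2$ would force $\ell(v_1)=\ell(v_2)$ while $a_{21}=0$ would force $\ell(v_2)\ne\ell(v_1)$. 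So the step ``from which the symmetry of $\sim$ is extracted'' fails, and no amount of bookkeeping after that point can repair it.

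This gap is shared by the paper's own proof, which silently assumes that ``$\ell(v_i)=\ell(v_j)$ iff $a_{ij}\in\{2,-1\}$'' is a consistent prescription. The statement as written appears to be missing a hypothesis such as ``$a_{ij}\in\{2,-1\}$ iff $a_{ji}\in\{2,-1\}$''; with that added, your argument (and the paper's) goes through cleanly.
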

\begin{proof}{\emph{Necessity}:} Let $M=[a_{ij}]$ be the $L$-matrix of a labeled directed graph $D$ of order $p$ . If three vertices  $v_i$, $v_j$,and $v_k$ in $D$ are such that, $\ell(v_i) = \ell(v_j)$, and $\ell(v_j) = \ell(v_k)$ then $\ell(v_i) = \ell(v_k)$. In otherwords, if $a_{ij}$, $a_{jk} \in \{2, -1\}$ then $a_{ik}\in \{2,-1\}$. Thus (i) holds. Suppose now,
$\ell(v_i) = \ell(v_j)$, and $\ell(v_j) \neq \ell(v_k)$. Then $\ell(v_i) \neq \ell(v_k)$. In otherwords, $a_{ij}\in\{2,-1\}$ and $a_{jk}\in\{0,1\}$, then $a_{ik}\in \{0,1\}$ and (ii) holds.\\\\
{\emph{Sufficiency}}: Let $M=[a_{ij}]$ be a matrix of order $p$ satisfying the conditions (i) and (ii). One can easily construct a vertex labeled digraph $D$ with vertex set $V=\{v_1,v_2,\ldots,v_p\}$ such that $M$ is the $L$-matrix of $D$ as follows:\\

For any two vertices $v_i$, $v_j$ in $V$, $v_iv_j$ is an arc if, and only if, $a_{ij}\in\{2,1\}$. Further $\ell(v_i)=\ell(v_j)$ if, and only if, $a_{ij}\in\{2,-1\}$. One can easily see that $M$ is the $L$-matrix of the labeled digraph $D$ thus constructed.\end{proof}
\section{Digraph Coloring and $L$-Matrices}
\qquad The \emph{color matrix} of a colored digraph $D$ is nothing but the $L$-matrix of a vertex labeled digraph $D$.\\
\qquad \hskip 1cm Given a coloring of a digraph $D$, one can easily find its color matrix $M=[a_{ij}]$. Conversely, given the color matrix of $D$, one can find the digraph, and the partition of the vertex set into color classes. By Theorem 7, for an acyclic digraph $D$ , $\chi_d(D)= 1$.\\
The following is a characterization of an acyclic digraph $D$ in terms of its color matrix. We omit the proof.\\
\begin{prop} A square matrix $M=[a_{ij}]$ of order $p$ is the color matrix of an acyclic digraph $D=(V,E)$ of order $p$ if, and only if,\\
(i) for all $i$, $j$, $i\neq j $, $a_{ij}\in \{2,-1\}$.\\
(ii) $a_{ij} =2 \Leftrightarrow a_{ji}= -1$.\end{prop}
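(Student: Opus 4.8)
The plan is to prove the two directions of the equivalence separately, using Theorem 7 to fix the number of colours available and the Characterization Theorem to translate between matrices and vertex labelled digraphs.

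For necessity, suppose $M=[a_{ij}]$ is the color matrix of an acyclic digraph $D$. By Theorem 7, $\chi_d(D)=1$, so the colouring that $M$ records gives every vertex the same colour; reading this colouring as a vertex labelling, $\ell(v_i)=\ell(v_j)$ for all $i\neq j$. The definition of the $L$-matrix then forces each off-diagonal entry to equal $2$ (when $v_iv_j\in E$) or $-1$ (when $v_iv_j\notin E$), which is (i). For the forward half of (ii), $a_{ij}=2$ means $v_iv_j\in E$, and acyclicity forbids the reverse arc, so $a_{ji}=-1$; in other words $D$ contains no directed $2$-cycle.

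For sufficiency, suppose $M$ satisfies (i) and (ii). Its diagonal is zero and every off-diagonal entry lies in $\{2,1,0,-1\}$; the two conditions of the Characterization Theorem hold trivially, since every off-diagonal entry is already in $\{2,-1\}$ and none equals $0$ or $1$. Hence $M$ is the $L$-matrix of a vertex labelled digraph $D$ in which, by the reconstruction recipe of that theorem, all vertices carry one label and $v_iv_j\in E\iff a_{ij}=2$. Once $D$ is known to be acyclic, Theorem 7 produces a valid $1$-colouring of $D$, and its color matrix is exactly $M$; so it remains to rule out a directed cycle in $D$. Here one would invoke that $M$ is a color matrix, so the monochromatic assignment is realised by some colouring sequence $s$ in which the head of every arc precedes its tail; a directed cycle in $D$ would then yield a cyclic chain of precedences in $s$, which is impossible.

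The step I expect to be the genuine obstacle is this last one, combined with the role of (ii). Condition (ii) asserts that for every pair $\{i,j\}$ precisely one of $v_iv_j,v_jv_i$ is an arc, which describes a tournament rather than an arbitrary acyclic digraph: an acyclic digraph with two non-adjacent vertices has $a_{ij}=a_{ji}=-1$ and violates (ii), while the digraph encoded by a matrix satisfying (i) and (ii) may well be a cyclic triangle. So the clean route is to read the proposition for (transitive) tournaments, or to add the hypothesis that the encoded digraph is itself acyclic; under that reading the argument sketched above goes through, and everything else is routine bookkeeping with the $L$-matrix definition and Theorem 7.
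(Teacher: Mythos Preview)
The paper explicitly omits the proof of this proposition, so there is no argument to compare your attempt against. What can be assessed is whether your sketch is internally sound and whether your diagnosis of the statement is accurate.

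Your necessity argument is correct: Theorem~7 gives $\chi_d(D)=1$ for $D$ acyclic, so in any colouring realising this every vertex receives the same colour, and the $L$-matrix definition then forces each off-diagonal entry into $\{2,-1\}$. The forward half of (ii) is also fine, since a $2$-cycle is a directed cycle.

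Your closing paragraph identifies a genuine defect in the proposition as stated, and you are right to flag it. Under (i), condition (ii) reads: for every $i\neq j$, exactly one of $a_{ij},a_{ji}$ equals $2$. That makes the encoded digraph a tournament. Thus necessity fails for any acyclic digraph with a pair of non-adjacent vertices (both off-diagonal entries are $-1$), and sufficiency fails because the matrix of a directed $3$-cycle satisfies (i) and (ii) while the digraph is not acyclic. So the proposition cannot hold as written; the most charitable repair is either to restrict to tournaments (where acyclic $=$ transitive, and then the characterisation does go through with your argument) or to weaken (ii) to the one-sided implication $a_{ij}=2\Rightarrow a_{ji}=-1$ and add a separate acyclicity hypothesis on the reconstructed digraph. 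Your sketch already supplies the ingredients for either reading; the only thing missing is that you do not quite commit to one and carry it through, but that is understandable given the ambiguity in the source.
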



\begin{thebibliography}{99}

\bibitem{G}Geir Agnarsson, \'{A}g\'{u}st S. Egilsson, Magnús M. Halldórsson, Vertex coloring acyclic digraphs and their corresponding hypergraphs, $\emph{Discrete Applied Mathematics}$, \textbf{156}, (2008), 1918 - 1928.
\bibitem{S} Stephan Dominique Andres, Asymmetric directed graph coloring games, $\emph{Discrete Mathematics}$, \textbf{309}(18),  5799 - 5802.
\bibitem{F} F.Harary, $\emph{Graph Theory}$, Addison-Wesley, Reading, MA, (1969).
\bibitem{Ah}Ararat Harutyunyan 	and Bojan Mohar, Gallai's Theorem for List Coloring of Digraphs, $\emph{SIAM Journal on Discrete Mathematics}$, \textbf{25}(1), (2011).
\bibitem{N} V Neumann-Lara, The dichromatic number of a digraph, $\emph{Journal of Combinatorial Theory, Series B}$, \textbf{33} (3), (1982), 265 - 270.
\bibitem{es} E.Sampathkumar and M.A.Sriraj, Vertex Labeled/Colored graphs, Matrices and Signed graphs, $\emph{J. of Combinatorics and Information System Sciences}$, \textbf{37}(2-4), (2012), 221-230.
\bibitem{ES} Eric Sopena, Oriented graph coloring, $\emph{Discrete Mathematics}$, \textbf{229},(2001), 359-369.

\end{thebibliography}
\end{document}